\newcommand{\baseenvskip}{\baselineskip 6.53mm}
\newtheorem{thm}{Theorem}
\newtheorem{question}{Question}
\newtheorem{lemma}[thm]{Lemma}
\newtheorem{claim}[thm]{Claim}
\newtheorem{prop}[thm]{Proposition}
\newtheorem{notation}[thm]{Notation}
\theoremstyle{remark}
\newtheorem{rmk}{Remark}[section]
\newenvironment{remark}{\begin{rmk}\rm\baseenvskip}{\end{rmk}}
\theoremstyle{remark}
\newtheorem{ex}{\rm{\textbf{Example}}}[section]
\newtheorem{dfn}{\rm{\textbf{Definition}}}[section]
\numberwithin{equation}{section} \numberwithin{thm}{section}
\numberwithin{rmk}{section} \numberwithin{figure}{section} \numberwithin{dfn}{section}
\numberwithin{ex}{section}
\newcommand{\ds}{\displaystyle}
\newcommand{\h}{\mathcal{H}}
\newcommand{\R}{\mathbb{R}}
\newcommand{\C}{\mathbb{C}}
\newcommand{\bd}{\partial}
\newcommand{\td}{\tilde}
\newcommand{\mbb}{\mathbb}
\newcommand{\spt}{\text{\textnormal{spt}}}
\newcommand{\llb}{\llbracket}
\newcommand{\rrb}{\rrbracket}
\newcommand{\cross}{\times}
\newcommand{\lip}{\text{\textnormal{Lip}}}
\newcommand{\mass}{\mbb{M}}
\newcommand{\D}{\mbb D}
\newcommand{\Ha}{\mathcal{H}}
\newcommand{\dir}{\text{\textnormal{Dir}}}
\newcommand{\Dir}{\text{\textnormal{Dir}}}
\newcommand{\atantwo}{\text{\textnormal{atan2}}}
\newcommand{\restr} {
\hskip2.5pt{\vrule height7pt width.5pt depth0pt}
\hskip-.2pt\vbox{\hrule height.5pt width7pt depth0pt}
\, }
  \author{Quentin Funk }
       \address{Department of Mathematics, Rice University\\ 6100 Main, MS136 \\
Houston TX 77251-1892}
       \email{qfunk@rice.edu}
         \author{Robert Hardt }
       \address{Department of Mathematics, Rice University\\ 6100 Main, MS136 \\
Houston TX 77251-1892}
       \email{hardt@rice.edu}
 \thanks{The first author was partially supported by NSF DMS-1148609, the second author was partially supported by NSF DMS-1207702.}
   \subjclass[2010]{49Q15, 49Q05, 53A30}
 \keywords{Multiple-valued maps, geometric measure theory, conformal maps}
\begin{document}

\title[A Multiple-Valued Plateau Problem]{A Multiple-Valued Plateau Problem}
% *****************Author(s) information ****************************

\maketitle

\begin{abstract}
The existence of Dirichlet minimizing multiple-valued functions for given boundary data has been known since pioneering work of F. Almgren. Here we prove a multiple-valued analogue of the classical Plateau problem of the existence of area-minimizing mappings of the disk. Specifically, we find, for $K \in \mathbb N,$ $k_1,...,k_K\in \mathbb N$ with sum $Q$ and any collection of $K$ disjoint Lipschitz Neighborhood Retract Jordan curves, optimal multiple-valued boundary data with these multiplicities which extends to a Dirichlet minimizing $Q$-valued function with minimal Dirichlet energy among all possible monotone parameterizations of the boundary curves. Under a condition analogous to the Douglas condition for minimizers from planar domains, conformality of the minimizer follows from topological methods and some complex analysis. Finally, we analyze two particular cases: in contrast to single-valued Douglas solutions, we first give a class of examples for which our multiple-valued Plateau solution has branch points. Second, we give examples of a degenerate behavior, illustrating the weakness of the multiple-valued maximum principle and provide motivation for our analogous Douglas condition.
\end{abstract}

\section{Introduction}

Multiple-valued functions were first introduced by F. Almgren in \cite{bigreg} to analyze the regularity of mass-minimizing rectifiable currents in arbitrary codimension. Such functions have recently seen a resurgence in the works of C. De Lellis and E. N. Spadaro \cite{DelSeries} (and its continuations), \cite{revisit, Delint}, S. Chang \cite{Schang}, W. Zhu \cite{WZhu2dim, reducing}, C. C. Lin \cite{cclin1}, P. Bouafia \cite{selection}, P. Bouafia and T. De Pauw \cite{Bouafia}, and P. Mattila \cite{Mattila}, to name just a few.

A $Q$-valued function to $R^n$ is essentially a single-valued function taking values in the set $A_Q(R^n)$ of all unordered $Q$ tuples of points in $\R^n$.  Equivalently $A_Q(\R^n)$ may be considered as the space of all sums $\sum_{i=1}^Q [[ a_i ]] $ of $Q$ Dirac measures of points in $R^n$.  Thus, for $\Omega\subset \R^m$, a function $f:\Omega\to A_Q(\R^n)$ corresponds to a $Q$- valued function from $\Omega$ to $\R^n$. We will occasionally refer to such functions as     \emph{multiple-valued functions}.

One can metrize $A_Q(\R^n)$ via a standard `translation' invariant metric. With this metric, Almgren showed that $A_Q(\R^n)$ admits a bi-Lipschitz embedding onto a Lipschitz retract of a high dimensional Euclidean space, and used this to define Sobolev multiple-valued maps. Further, he gave a well defined notion of differentiability and Dirichlet energy, and solved the Dirichlet problem for such functions.

In this paper we seek to formulate and prove a result analogous to the classical Plateau problem in the setting of multiple-valued functions. Our main result is the following:

\begin{thm}
\label{thm:main}
 Let $\Gamma_1, \ \Gamma_2, \ ...\ , \Gamma_K$ be disjoint, simple closed curves of finite length in $\R^n$ so that $\Gamma_i \cap \Gamma_K = \phi$ for $i\neq j$ and $k_1,...,k_K$ positive integers so that $\sum k_i = Q$. Further, suppose that each $\Gamma_i$ is a Lipschitz Neighborhood Retract of an open neighborhood $U_i$. Denote by $\D$ the open unit disk in $\R^2$. Then, if

 \begin{equation*}
 \mathcal A = \{ F\in W^{1,2}(\D, A_Q(\R^n) )\cap C^0(\bar\D,A_Q(\R^n)) \ | \ F|_{S^1}(z)=\sum_i \sum_{\zeta = z^{k_i}}\llb f_i(\zeta) \rrb \ 
 \end{equation*}
\begin{equation*}
  \mbox{with $f_i:S^1\rightarrow \Gamma_i$ weakly monotonic} \}
  \end{equation*}

 \noindent then there is a map $f\in \mathcal A$ so that:

 \begin{center}
 $\Dir(f)=\ds\min_{G\in \mathcal A} \Dir(G)$.
 \end{center} \end{thm}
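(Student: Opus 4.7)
My plan is to prove Theorem \ref{thm:main} via the direct method of the calculus of variations. First, observe that $I:=\inf_{\mathcal A}\Dir$ is finite: the Lipschitz Neighborhood Retract hypothesis permits the construction of an explicit competitor by choosing any Lipschitz weakly monotonic parameterizations $f_i:S^1\to\Gamma_i$, extending the associated multiple-valued boundary datum harmonically into $\D$ via Almgren's bi-Lipschitz embedding, and composing with the Lipschitz retraction onto $\bigcup_i\Gamma_i$ where necessary. Take a minimizing sequence $\{F_n\}\subset\mathcal A$ with $\Dir(F_n)\to I$, writing the boundary data as $F_n|_{S^1}(z)=\sum_i\sum_{\zeta^{k_i}=z}\llb f_{i,n}(\zeta)\rrb$ with each $f_{i,n}:S^1\to\Gamma_i$ weakly monotonic.

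The next step is to normalize the sequence so as to control the non-compactness of the conformal automorphism group of $\D$. The rotation subgroup $\{z\mapsto e^{i\alpha}z\}$ clearly preserves $\mathcal A$ (a rotation of $\D$ by $\alpha$ corresponds to a rotation of each $f_{i,n}$-parameter by $\alpha/k_i$); for the remaining two-parameter family of disk automorphisms one must argue more carefully, since an arbitrary M\"obius map does not preserve the $k_i$-fold cover structure when $k_i>1$, so a Douglas-Rad\'o style three-point normalization must be adapted accordingly. After fixing such a normalization, the uniform Dirichlet bound on $\{F_n\}$ together with the uniform pointwise bound on boundary values in $\bigcup_i\Gamma_i$ yields a uniform $W^{1,2}$ bound. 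Applying Almgren's bi-Lipschitz embedding of $A_Q(\R^n)$ into Euclidean space, we pass to a subsequence converging weakly in $W^{1,2}$ and strongly in $L^2$ to some $F:\D\to A_Q(\R^n)$, and weak lower semicontinuity of $\Dir$ gives $\Dir(F)\le I$.

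The principal obstacle is then to verify that $F\in\mathcal A$, that is, that its boundary trace has the prescribed form with weakly monotonic $f_i$. The key tool is a Courant-Lebesgue-type estimate adapted to multiple-valued maps: a uniform Dirichlet bound forces, on suitable small circles near any boundary point, arbitrarily small oscillation of $F_n$. Combined with the three-point normalization, this rules out degeneration of the $f_{i,n}$ (no arc of $S^1$ may be collapsed to a point), and a Helly-type selection applied to the weakly monotonic maps into the compact curves $\Gamma_i$ yields a subsequence with $f_{i,n}\to f_i$ uniformly, each limit $f_i$ still weakly monotonic. Standard trace theory matches these limits to the boundary values of $F$, and Almgren's interior H\"older regularity for Dirichlet minimizers together with the LNR property gives continuity of $F$ on $\bar\D$. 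Thus $F\in\mathcal A$ with $\Dir(F)\le I$, so $F$ is the desired minimizer. The principal hurdle in carrying out this plan is establishing the Courant-Lebesgue estimate in the multiple-valued setting and reconciling the three-point normalization with the $k_i$-fold cover structure, since the full M\"obius group does not act naturally on $\mathcal A$.
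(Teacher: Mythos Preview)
Your outline follows the classical Douglas--Rad\'o template, but it glosses over precisely the difficulty that makes this theorem nontrivial, and this is a genuine gap. A single disk automorphism can impose a three-point condition on \emph{one} of the boundary parameterizations, say $f_{1,n}$. It cannot simultaneously normalize $f_{2,n},\dots,f_{K,n}$: there is only one M\"obius map available, but $K$ independent monotone maps to control. Your sentence ``Combined with the three-point normalization, this rules out degeneration of the $f_{i,n}$'' is therefore unjustified for $i\ge 2$. Nothing in a Courant--Lebesgue estimate alone prevents $f_{2,n}$ from collapsing most of $S^1$ to a single point of $\Gamma_2$ while $f_{1,n}$ remains well-behaved. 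You seem to locate the obstruction in the $k_i$-fold cover structure, but the real issue is the multiplicity of curves $K\ge 2$; even with all $k_i=1$ the problem is already present.

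The paper's proof addresses exactly this. It first replaces the minimizing sequence by Dirichlet minimizers for their own boundary data, then uses interior H\"older regularity to find an annular neighborhood $A$ of $S^1$ on which each $F_k$ decomposes as $\sum_i\llb F_k^i\rrb$ with $F_k^i$ taking values near $\Gamma_i$ (Lemma~\ref{lemma:nbd}). The heart of the argument is a dichotomy on the branch set in $\D\setminus A$: either no branch point is \emph{connecting} (in which case the decomposition on $A$ extends to all of $\D$, the $F_k^i$ are globally separate, and one may precompose each with its own disk automorphism independently), or some branch point is connecting, and then a normalization of one boundary map propagates through the branch structure to force non-degeneration of the others. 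This connecting/non-connecting analysis, together with the modification of Claim~\ref{claim:modification} that pins down the branch locus, is the substantive new content, and it is entirely absent from your proposal.
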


Recall that a map $f_i: S^1 \rightarrow \Gamma_i$ is said to be \emph{weakly monotonic} if it may be written as $\varphi_i ( e^{i\tau_i(\theta)})$, where $\varphi_i:S^1\rightarrow \Gamma_i$ is a homeomorphism, and $\tau_i : [0,2\pi] \rightarrow \R$ is a nondecreasing continuous, $2\pi$ periodic function.

Unfortunately, there is a subtlety in the existence of `wrapped' solutions--that is, those solutions with some $k_i > 1$. For example, if the optimal $f_i$ is constant on an arc of length greater than $\frac{ 2\pi }{k_i}$, there will be a branch point on the boundary circle $S^1$. Simple planar examples show that this can indeed occur--and so we must introduce a condition to guarantee such branching does not occur:

\begin{dfn}
\label{dfn:wrapped}
In the setting of Theorem \ref{thm:main}, the boundary data  $(\Gamma_1,...,\Gamma_K,k_1,...,k_K)$ is said to \emph{admit a wrapped solution} if there exists $f$ a minimizing element of $\mathcal A$ so that:

\begin{equation*}
f|_{S^1}(z)=\sum_i \sum_{\zeta = z^{k_i}}\llb f_i(\zeta) \rrb
\end{equation*}

\noindent for $f_i : S^1 \rightarrow \Gamma_i$ weakly monotonic so that no $f_i$ is constant on an arc of length greater than or equal to $\frac{2\pi}{k_i}$.

\end{dfn}

One should note that, while Theorem \ref{thm:main} guarantees a minimizing element of $\mathcal A$ under fairly general conditions, this element can fail to satisfy the condition of Definition \ref{dfn:wrapped} without contradicting its admissibility. 

Definition \ref{dfn:wrapped} is an analogous condition to the Douglas condition for planar domain Plateau solutions, as in \cite[8.6]{Hildmin} where similar degenerations must be ruled out. Interestingly, though, in contrast to the classical case, we are able to establish existence of a solution without this condition--but regularity fails.

As in the classical case, we begin with an admissible sequence $\{f^n\}$ whose energy tends towards the infimum and we extract a subsequence which converges ($W^{1,2}$ weakly) to some function $f: \D \rightarrow A_Q(\R^n)$. Then the issue is showing the admissibility of the function $f$--a fact which ultimately comes down to proving that the boundary data of the subsequence cannot collapse. In fact, a posteriori, a selection for the optimal boundary data will consist of homeomorphisms of the boundary; a stronger result than that $f|_{S^1}$ is a homeomorphism onto a subset of $A_Q(\R^n)$.

To prevent the boundary data from collapsing, one first precomposes with a disk automorphism to guarantee that, if $f^n|_{S^1}$ has boundary data $f^n_j$ for $f^n_j : S^1 \rightarrow \Gamma_j$ as in the definition of $\mathcal A$, then $f^n_1$ satisfies a three point condition. We will then show that (in the case that $f$ is branched) this normalization forces a normalization for each $f^n_j$--on a first attempt, one might try, supposing towards a contradiction, to analyze Dirichlet minimizing functions $G: \D \rightarrow A_Q(\R^n)$ for which $0\in \spt(G(s))$ for all $s\in S^1$. Unfortunately, such functions can exist with rather elaborate branching behavior, as exhibited in Section \ref{section:degen}, so a more complicated argument is required.

Finally, using recent results regarding the Plateau problem in abstract metric spaces (in particular, \cite{Disks}), in conjunction with Theorem \ref{thm:main}, we are able to prove the following.

\begin{thm}
\label{thm:conformal}
Suppose that $\Gamma_1,...,\Gamma_K$ and $k_1,...,k_K$ are given as in Theorem \ref{thm:main}, and further suppose that  the boundary data  $(\Gamma_1,...,\Gamma_K,k_1,...,k_K)$ admits a wrapped solution. Then there exists  $F : \D \rightarrow A_Q(\R^m)$, an area-minimizing Plateau solution for some collection of disjoint Jordan curves $\Gamma_1,...,\Gamma_K$, so that if $f_1,...,f_K$ is the associated boundary data the following holds:
\begin{enumerate}
\item around each regular point there exists a conformal selection for $F$
\item $\Dir(F) = \mbox{MV-Area}(F)$ and
\item Each $f_i$ is a homeomorphism.
\end{enumerate}

\noindent Here $\mbox{MV-Area}(F)$ is the two dimensional area of the set $\{\spt(F(x) \ | \ x\in \D\}$, counting multiplicity.

\end{thm}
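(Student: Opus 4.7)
The plan is to combine Theorem \ref{thm:main} with the metric-space Plateau theory of \cite{Disks} to upgrade a Dirichlet minimizer into a conformal one. By the wrapped-solution hypothesis and Theorem \ref{thm:main}, we obtain a Dirichlet minimizer $F_0 \in \mathcal A$ whose boundary parameterizations $f_i^0$ are weakly monotone with no arc of constancy of length $\ge 2\pi/k_i$; this $F_0$ is the starting candidate we will improve by reparameterization. The overall strategy: encode the image of $F_0$ (with sheet multiplicity) as an abstract metric disk, invoke \cite{Disks} to extract an energy-minimizing, infinitesimally isotropic parameterization of that disk, and then lift this back to an $A_Q(\R^n)$-valued map.

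Next, I would realize the image of $F_0$ as an abstract metric disk. At each regular point $x \in \D$, $F_0$ admits $Q$ smooth single-valued selections; gluing these along their natural identifications (and compactifying across the isolated interior branch points) produces a $2$-rectifiable set $X_0$ equipped with a multiplicity function counting sheets and the intrinsic length metric $d_0$ pulled back from the Euclidean metric on $\R^n$. Topologically $X_0$ is a disk, and its boundary consists of loops mapping onto the $\Gamma_i$ with the prescribed multiplicities $k_i$. The wrapped condition of Definition \ref{dfn:wrapped} guarantees that no arc of $\partial X_0$ collapses to a point, so $\partial X_0$ is a genuine topological boundary circle.

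I would then apply the Lytchak--Wenger existence and regularity theorems from \cite{Disks} to $(X_0, d_0)$. Since $X_0$ embeds in $\R^n$ it satisfies the quadratic isoperimetric inequality required there, and one obtains an area- and energy-minimizing $u \in W^{1,2}(\D, X_0)$ whose $S^1$-restriction is a weakly monotone parameterization of $\partial X_0$ and which is infinitesimally isotropic. Composing $u$ with the tautological map $X_0 \to A_Q(\R^n)$ (sending each point to the $Q$-tuple of its preimages with multiplicity) yields the desired $F$. Conformality of each local branch at regular points is conclusion $(1)$; integrating the pointwise conformal identity between energy density and area density gives $\Dir(F) = \mbox{MV-Area}(F)$, which is $(2)$. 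For $(3)$, a Fatou-type boundary correspondence for conformal maps combined with the wrapped condition rules out \emph{any} arc of constancy in $f_i$ (a conformal sheet constant on a boundary arc would be constant everywhere), and weak monotonicity then promotes each $f_i$ to a homeomorphism of $S^1$ onto $\Gamma_i$.

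The main obstacle will be the second step and the lift back in the third: rigorously identifying the $Q$-valued image with a metric disk satisfying the hypotheses of \cite{Disks}, particularly near interior branch points where the sheeting structure degenerates, and ensuring that the metric-space parameterization returned by \cite{Disks} lifts uniquely to an $A_Q(\R^n)$-valued map that preserves the prescribed boundary multiplicity structure. Closely related is verifying that the area-minimizing $u$ from \cite{Disks} has the same Dirichlet energy as $F_0$ (so that replacing $F_0$ by $F$ is legitimate in $\mathcal A$), which should follow from the minimality of $F_0$ combined with the conformal equality $\Dir(F) = \mbox{MV-Area}(F) \leq \Dir(F_0)$.
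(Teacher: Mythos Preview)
Your proposal has a genuine structural gap in the second and third steps. You assert that the image surface $X_0$ is topologically a disk, but it is not: its boundary carries $K$ distinct Jordan curves $\Gamma_1,\dots,\Gamma_K$ (each traversed with multiplicity $k_i$), so $X_0$ is a planar domain with $K$ boundary components. The Lytchak--Wenger theory in \cite{Disks} produces disk-type parameterizations and does not directly furnish a conformal map $u:\D\to X_0$ when $K>1$. More seriously, even granting such a $u$, the ``tautological map $X_0\to A_Q(\R^n)$'' you invoke does not exist: a point of $X_0$ is a single point of $\R^n$ (possibly with a sheet multiplicity), and there is no canonical way to attach the remaining $Q-1$ companion points to it. The composition $\iota\circ u$ is an ordinary $\R^n$-valued map, not a $Q$-valued one, so you never return to $\mathcal A$.

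The paper resolves exactly this by working with the \emph{graph} $\Gamma(F)\subset\R^2\times\R^n$ rather than the image, observing that $\pi_1:\Gamma(F)\to\D$ is a finite branched cover, and using Koebe's uniformization to identify $\Gamma(F)$ conformally with a planar domain $\Omega$ having $K$ boundary circles. The classical Douglas inner-variation argument from \cite[\S 4.5, \S 8.2]{Hildmin} is then run on the single-valued map $\kappa=\pi_2\circ\varphi:\Omega\to\R^n$. The key device replacing your ``tautological map'' is an Ahlfors--Bieberbach type result (Theorem~\ref{thm:genahlfors}): any such planar domain admits a proper holomorphic branched cover $\Phi:\tilde\Omega\to\D$ of total degree $Q$ with prescribed boundary degrees $k_i$. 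Pushing forward by $\Phi$, namely setting $G(z)=\sum_{y\in\Phi^{-1}(z)}\llb\kappa\circ\sigma(y)\rrb$, produces an admissible $Q$-valued competitor on $\D$ with the same energy as $\kappa\circ\sigma$; this is what closes the contradiction if $\kappa$ were non-conformal. Your outline of part~(3) via Schwarz reflection is essentially what the paper does.
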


This multi-valued Plateau problem may, as described in Section 6, lead to minimal surfaces necessarily having branch points, even in $\mathbb R^3$.   Here the location or number of the branch points is not fixed. Near a single prescribed branch point, a minimal surface has been studied by considering corresponding solution of a ``multi-valued minimal surface equation'' in  \cite{add3},  \cite{add1} and \cite{add2}. 

In Section \ref{section:topology},  we also present the sketch of a proof of Theorem \ref{thm:conformal} in the special case where $K=Q$ and $k_i=1$ using topological methods and solutions to the classical Douglas problem.

\section{Preliminaries}

A detailed introduction to the theory of multiple-valued functions may be found in \cite{revisit}. Here we present only the details essential to the current paper. We will mostly follow the notations found therein, with the exception that our definition of ``affinely approximatable'' is slightly weaker than the definition of ``differentiable'' given therein. Throughout the paper, $m$, $n$, and $Q$ will denote natural numbers, $\Omega$ is an open subset of $\R^m$ with sufficiently regular boundary, and, for $x$ an element of any metric space $X$, we will use $B_r(x)$ to denote the open ball of radius $r$ centered at $x$. We also denote by $\mathcal H^\alpha$ the $\alpha$-dimensional Hausdorff measure--in particular, $\mathcal H^0$ is counting measure.

We define the metric space $A_Q(\R^n)$ as the set of positive integer sums of Dirac measures on $\R^n$ with total mass $Q$. If, for $P \in \R^n$, $\llb P \rrb$ denotes the Dirac mass at $P$, then we may define $A_Q(\R^n)$ in symbols as follows:

\begin{equation*}
A_Q(\R^n) = \{\ds \sum_{i=1}^Q \llb P_i \rrb \ | \ P_i \in \R^n\}.
\end{equation*}

\noindent We note that we do not require the $P_i$'s to be distinct, and that we will occasionally omit $Q$ and $n$ when they are clear from the context. Following \cite{bigreg}, we metrize $A_Q(\R^n)$ by a metric $\mathcal G$ defined as:

\begin{equation*}
\mathcal G(\sum \llb P_i \rrb , \sum \llb Q_i \rrb ) = \min_{\sigma \in S_Q} \sqrt{\sum ||P_i - Q_{\sigma(i)}||^2}.
\end{equation*}

\noindent Where $S_Q$ denotes the permutation group on $\{1,...,Q\}$. It is easy to see that this gives a well-defined metric under which $A_Q(\R^n)$ is a complete metric space.

For an open $\Omega\subset \R^m$, we say a function $f: \Omega \rightarrow A_Q(\R^n)$ is a multiple-valued function. If $g : \Omega \rightarrow A_{Q'}(\R^n)$ is another multiple-valued function, we denote by $\llb f \rrb + \llb g \rrb$ the function into $A_{Q+Q'}(\R^n)$ whose value at the point $x\in \Omega$ is the sum of the measures $f(x)$ and $g(x)$.

 If $f_i:\Omega \rightarrow \R^n$ are such that $f=\sum \llb f_i\rrb$, we say $\{f_i\}$ is a selection for $f$. A major problem in the theory of multiple-valued maps is attempting to derive selections with some degree of regularity from multiple-valued functions. However, we always have the following result:

\begin{thm}
If $f: \Omega \rightarrow A_Q(\R^n)$ is measurable, then there exists a selection for $f$ consisting of measurable functions.
\end{thm}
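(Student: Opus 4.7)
The plan is to reduce the statement to a standard Borel measurable selection theorem such as that of Kuratowski and Ryll-Nardzewski, applied to the quotient map $\pi:(\R^n)^Q\to A_Q(\R^n)$ sending $(P_1,\ldots,P_Q)\mapsto\sum_{i=1}^Q\llb P_i\rrb$. The essential structural observation is that $\pi$ is not just continuous but also \emph{open} with respect to $\mathcal G$. Continuity follows from the trivial estimate $\mathcal G(\pi(P),\pi(P'))\le\|P-P'\|$. For openness, given $P\in(\R^n)^Q$ and $\epsilon>0$, any $b\in A_Q(\R^n)$ with $\mathcal G(b,\pi(P))<\epsilon$ admits, by the very definition of $\mathcal G$, a labelling $b=\sum_i\llb Q_i\rrb$ satisfying $\sum_i\|Q_i-P_i\|^2<\epsilon^2$, whence $(Q_1,\ldots,Q_Q)\in B_\epsilon(P)$ and $b\in\pi(B_\epsilon(P))$. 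Thus $B_\epsilon(\pi(P))\subseteq\pi(B_\epsilon(P))$, so $\pi$ carries open sets to open sets.

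With openness in hand, I would consider the multifunction $F:\Omega\to 2^{(\R^n)^Q}$ defined by $F(x)=\pi^{-1}(f(x))$. Each value is nonempty, closed, and in fact finite (of cardinality at most $Q!$) in the Polish space $(\R^n)^Q$. To apply Kuratowski-Ryll-Nardzewski, one needs $\{x:F(x)\cap U\neq\emptyset\}$ to be measurable for every open $U\subseteq(\R^n)^Q$; but this set equals $f^{-1}(\pi(U))$, which is measurable since $\pi(U)$ is open by the previous step and $f$ is measurable. The theorem then produces a Borel measurable section $\sigma:\Omega\to(\R^n)^Q$ of $F$, that is, $\pi\circ\sigma=f$. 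Its coordinate components $f_1,\ldots,f_Q:\Omega\to\R^n$ are measurable, and by construction $f(x)=\sum_i\llb f_i(x)\rrb$, giving the desired selection.

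The main obstacle in this approach is verifying the openness of $\pi$; once that is in place the rest is routine. An alternative, more elementary route exploits the separability of $A_Q(\R^n)$ to approximate $f$ uniformly by simple $A_Q$-valued functions $g_k$, each of which admits a trivial measurable selection on each of its finitely many atoms, and then extracts a convergent rearrangement of these selections using an optimal-matching argument. The difficulty there is exactly the need to match measurably at each $x$---a problem that the selection-theorem route finesses via the openness of $\pi$, so I would prefer the first approach.
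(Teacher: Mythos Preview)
Your argument is correct: the quotient map $\pi:(\R^n)^Q\to A_Q(\R^n)$ is continuous and open exactly as you verify, the multifunction $x\mapsto\pi^{-1}(f(x))$ has nonempty closed (indeed finite) values in a Polish space, and weak measurability follows from the identity $\{x:F(x)\cap U\neq\emptyset\}=f^{-1}(\pi(U))$, so Kuratowski--Ryll-Nardzewski applies and yields the desired measurable selection.

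As for comparison with the paper: the paper does not prove this theorem. It is stated in the Preliminaries section as a known background fact from the literature (the standard reference is Almgren's original work \cite{bigreg}; see also \cite{revisit}) and no argument is given. Your proof via the KRN selection theorem is one of the standard routes to this result and is entirely appropriate here. The alternative you sketch---approximating by simple $A_Q$-valued functions and matching---is closer in spirit to how the result is sometimes presented in the multiple-valued-function literature, but as you correctly observe, the selection-theorem route is cleaner once openness of $\pi$ is in hand.
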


Almgren developed an extrinsic theory through the help of a bi-Lipschitz embedding of $A_Q(\R^n)$ onto a Lipschitz retract of higher dimensional Euclidean space. A corollary attributed to B. White strengthens this embedding to locally preserve distances, and we present this version below.

\begin{thm}
\label{thm:embedding}

There exists $N=N(Q,n)$, an injective map $\zeta : A_Q(\R^n) \rightarrow \R^N$ and a Lipschitz retraction $\rho : \R^N \rightarrow \zeta(A_Q(\R^n))$ so that:

\begin{enumerate}
\item $\lip(\zeta) \leq 1$.
\item If $\mathcal Q = \zeta(A_Q(\R^n))$, then $\lip(\zeta^{-1}|_{Q}) \leq C(Q,n)$.
\item For any $T\in A_Q(\R^n)$, there exists $\delta$ so that $\zeta|_{B_\delta(T)}$ preserves distances.
\end{enumerate}

\end{thm}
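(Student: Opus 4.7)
My plan is to derive (1) and (2) directly from Almgren's original embedding construction in \cite{bigreg}. That construction selects finitely many generic linear functionals $\pi_1,\ldots,\pi_M : \R^n \to \R$, forms the sorted projection vectors $\pi_l^{\downarrow}(T) \in \R^Q$ for each $T = \sum_i \llb P_i \rrb$, and defines $\zeta(T)$ as the suitably normalized concatenation. The classical rearrangement inequality gives $\lip(\zeta)\leq 1$; the lower Lipschitz bound on $\zeta^{-1}$ restricted to $\mathcal{Q} = \zeta(A_Q(\R^n))$ follows from a combinatorial analysis of the possible sort orderings once enough generic projections have been chosen; and the image $\mathcal{Q}$, being a finite union of polyhedral cells in $\R^N$, admits a Lipschitz retraction $\rho : \R^N \to \mathcal{Q}$ constructed cell by cell.

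For (3), the first step is a reduction to the most singular base point. Writing $T = \sum_{j=1}^K k_j \llb P_j^* \rrb$ with the $P_j^*$ distinct, and setting $\delta \leq \frac{1}{3}\min_{j\neq \ell}|P_j^*-P_\ell^*|$, any $S, S' \in B_\delta(T)$ have their supports clustered near the $P_j^*$, and any matching realizing $\mathcal{G}(S,S')$ must respect these clusters. Hence $\mathcal{G}(S,S')^2 = \sum_j \mathcal{G}(S_j, S_j')^2$, and because the sort orderings of the $\pi_l$ separate different $P_j^*$'s once $\delta$ is small enough, $\zeta$ decomposes compatibly with this splitting. It then suffices to prove the local distance-preserving property in the single-cluster case $T = Q\llb 0 \rrb$, and I would follow White's strategy of augmenting Almgren's sort-based coordinates with additional terms---higher-order symmetric-polynomial invariants of the $P_i$'s, or equivalently sort-based coordinates composed with nonlinear corrections---chosen so that every canonical local matching realizing $\mathcal{G}$ is resolved coordinate-by-coordinate in $\zeta$.

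The main obstacle, I expect, is choosing these augmenting coordinates correctly. There is a genuine tension: Almgren's naive sort embedding is homogeneous under dilations, and a homogeneous embedding that is locally distance-preserving at the singular point $Q\llb 0 \rrb$ would automatically be globally distance-preserving, which is impossible since $A_Q(\R^n)$ admits no isometric Euclidean embedding. The White augmentation must therefore deliberately break homogeneity, and one expects higher-degree invariants to supply precisely the non-homogeneous corrections needed for local isometry on a small neighborhood of each base point, while still respecting the bi-Lipschitz and retract structure established in parts (1) and (2). Verifying that the resulting $\zeta$ simultaneously achieves all three properties is the delicate content of the corollary, and the step I would spend the most time on.
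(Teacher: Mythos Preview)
The paper does not supply a proof of this theorem: it is presented in the Preliminaries as a known result, with (1), (2) and the retraction due to Almgren \cite{bigreg} and (3) attributed to B.~White (see also \cite{revisit}). There is therefore no argument in the paper to compare your proposal against.

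Your outline for (1) and (2) is essentially the standard sorted-projection construction and is fine.

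Your treatment of (3), however, rests on a misreading of the statement---one that the paper's informal phrasing admittedly invites. If ``$\zeta|_{B_\delta(T)}$ preserves distances'' is read as saying that $\zeta$ restricted to $B_\delta(T)$ is an isometric embedding, then the claim is simply false at $T=Q\llb 0\rrb$ for $n\geq 2$: the space $A_Q(\R^n)$ near $Q\llb 0\rrb$ is a metric cone whose link is not a round sphere (for $Q=n=2$ it is $\R^2\times(\R^2/\{\pm 1\})$, carrying a cone point of total angle $\pi$), so no map into Euclidean space---homogeneous or otherwise---can be an isometry on any neighborhood of that point. Your homogeneity obstruction is thus sharper than you realize, and no augmentation by higher-order invariants can circumvent it. The precise result in the literature (cf.\ \cite[Cor.~2.2]{revisit}) is the \emph{radial} one: for each $T$ there is $\delta(T)>0$ with $|\zeta(T)-\zeta(S)|=\mathcal G(T,S)$ whenever $\mathcal G(T,S)<\delta(T)$. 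White's construction---which for each generic direction $e_l$ records the \emph{full} vectors $P_i$ ordered by $\langle P_i,e_l\rangle$, rather than just the scalar projections---achieves this and remains $1$-homogeneous. Your clustering reduction then handles general $T$ without further work, and the delicate non-homogeneous step you anticipate does not arise.
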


This embedding allows us to quickly define multiple-valued Sobolev functions. For $\Omega \subset \R^m$, $k\in \mbb N$ and $1 \leq p \leq \infty$, we define $W^{k,p}(\Omega, A_Q(\R^n))$ to be those functions $f: \Omega \rightarrow A_Q(\R^n)$ so that $\zeta \circ f \in W^{k,p}(\Omega, \R^N)$. While this definition may seem rather contrived, it has many computational benefits to the more natural equivalent definitions discussed in \cite{revisit}. In particular, we note that classical results regarding Lipschitz approximation of Sobolev functions, interpolation lemmas, traces and one-dimensional restrictions of Sobolev functions are seen to immediately carry over to the multiple-valued case.

We further define the \emph{Dirichlet energy} of a $W^{1,2}(\Omega, A_Q(\R^n))$ function as:

\begin{equation*}
\Dir(f,\Omega) = \int_\Omega |D(\zeta \circ f)|^2.
\end{equation*}

\noindent Again, this is simply a pragmatic definition--more natural equivalent definitions are available in the literature.

\begin{rmk}
\label{rmk:invar}
From this definition it immediately follows that if $\Omega$ is two dimensional, the Dirichlet energy is invariant under precomposition by conformal maps.
\end{rmk}
We say that a multiple-valued function $f\in W^{1,2}(\Omega, A_Q(\R^n))$ is \emph{Dirichlet minimizing} if

\begin{equation*}
\dir(f,\Omega) \leq \dir(g,\Omega)
\end{equation*}

\noindent for any $g\in W^{1,2}(\Omega, A_Q(\R^n))$ such that $f|_{\partial \Omega} = g|_{\partial \Omega}$. Here, the last condition means that the $W^{1,2}$ traces on $\partial\Omega$  of $\zeta\circ f$ and $\zeta\circ g$ coincide. In Almgren's original work, he proved that if $g\in W^{1,2}(\Omega, A_Q(\R^n))$, then there exists a Dirichlet minimizing $f\in W^{1,2}(\Omega, A_Q(\R^n))$ with $f|_{\partial \Omega} = g|_{\partial \Omega}$. He further showed that there is a constant $\alpha=\alpha(m,Q) >0$ so that such a minimizing function is equal almost everywhere to a function which is $\alpha$ H\"older continuous on any $\Omega'\subset\subset\Omega$. In the following, we assume that this identification has been made, and in particular that Dirichlet minimizers are continuous functions ``strictly defined'' at every point.

In case $m=1$, Sobolev functions automatically admit an absolutely continuous selection. We may define the space of absolutely continuous multiple-valued functions on an interval $I$, denoted by $AC(I,A_Q(\R^n)$, to be those functions $f:I\rightarrow A_Q(\R^n))$ so that $\zeta \circ f$ is absolutely continuous. We may now state a selection theorem for one dimensional multiple-valued Sobolev functions--see Proposition 1.2 of \cite{revisit}:

\begin{thm}
If $f\in W^{1,p}(I,A_Q(\R^n))$, then,
\begin{enumerate}
\item $f\in AC(I,A_Q(\R^n))$.
\item There exists a selection $\{f_i\} \subset W^{1,p}(I,\R^n)$ and so that $|Df_i| \leq |D(\zeta\circ f)|$.
\end{enumerate}
\end{thm}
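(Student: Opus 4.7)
My plan is to address the two parts separately, leaning heavily on the bi-Lipschitz embedding of Theorem \ref{thm:embedding}. For part (1), since by definition $\zeta \circ f \in W^{1,p}(I, \R^N)$ and $I$ is one-dimensional, the classical Sobolev embedding gives $\zeta \circ f \in AC(I, \R^N)$. Because $\zeta^{-1}$ restricted to $\mathcal Q = \zeta(A_Q(\R^n))$ is Lipschitz with constant $C(Q,n)$, applying it to the absolutely continuous curve $\zeta \circ f$ yields $f \in AC(I, A_Q(\R^n))$, establishing (1).

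For part (2), I would construct a continuous selection $\{f_i\}$ by a local-to-global gluing argument and verify the Sobolev bound via the local isometry property. For each $t_0 \in I$, Theorem \ref{thm:embedding}(3) provides $\delta > 0$ so that $\zeta$ preserves distances on $B_\delta(f(t_0)) \subset A_Q(\R^n)$; by continuity of $f$ there is an open subinterval $J_{t_0} \ni t_0$ with $f(J_{t_0}) \subset B_\delta(f(t_0))$. On $J_{t_0}$ I would pick any labeling of the $Q$ points of $\spt(f(t_0))$ (counted with multiplicity) and extend to continuous selections $\{f_i^{t_0}\}$ on $J_{t_0}$ by tracking the optimal permutation in the definition of $\mathcal G$. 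A compactness argument on $I$ (or recursion on a countable cover if $I$ is non-compact) yields a locally finite subcover; on each overlap $J_k \cap J_{k+1}$ the two local selections represent the same unordered tuple at every point, so composing with appropriate permutations $\sigma_k \in S_Q$ sequentially produces globally defined continuous selections $\{f_i\}$ on $I$.

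For the Sobolev bound, at a.e.\ $t \in I$ the function $\zeta \circ f$ is differentiable, and for $s$ near such $t$ the local isometry gives
\begin{equation*}
|\zeta(f(s)) - \zeta(f(t))|^2 = \mathcal G(f(s), f(t))^2 \leq \sum_{i=1}^Q |f_i(s) - f_i(t)|^2,
\end{equation*}
with equality whenever $\{f_i\}$ realizes the optimal permutation in $\mathcal G$, which holds for continuous selections near a point of differentiability of $f$. Dividing by $(s-t)^2$ and letting $s \to t$ yields $|(\zeta \circ f)'(t)|^2 = \sum_i |f_i'(t)|^2$, from which the pointwise bound $|Df_i(t)| \leq |D(\zeta \circ f)(t)|$ follows; integrating then gives $f_i \in W^{1,p}(I,\R^n)$.

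The main obstacle is the continuous selection itself at times $t_0$ where points of $\spt(f(t_0))$ coincide. There the labeling is not canonical and one must verify that the tracking procedure is well-defined. The local isometry property of $\zeta$ is precisely what makes this work: within a $\mathcal G$-ball where $\zeta$ is an isometry, any permutation ambiguity in matching repeated points leaves the image under $\zeta$ unchanged, so one can choose any continuous lift, while the gluing step on overlaps only requires that the unordered tuples agree pointwise. This is essentially where the one-dimensional assumption is used, since the absence of a tracking obstruction depends on the contractibility of each small parameter interval.
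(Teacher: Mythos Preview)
The paper does not supply its own proof of this statement; it is quoted as a preliminary result with a reference to Proposition~1.2 of \cite{revisit}. So there is no argument in the paper to compare yours against, and your proposal should be judged on its own merits.

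Your treatment of (1) is correct and in fact slightly more than needed: the paper \emph{defines} $f\in AC(I,A_Q(\R^n))$ to mean that $\zeta\circ f$ is absolutely continuous, so (1) is literally the classical embedding $W^{1,p}(I,\R^N)\hookrightarrow AC(I,\R^N)$ and the Lipschitz bound on $\zeta^{-1}$ plays no role. Your derivative estimate in (2) is also essentially right once a continuous selection is in hand: for a continuous selection and $s$ sufficiently close to $t$, the identity permutation is optimal in $\mathcal G(f(s),f(t))$, so $\sum_i|f_i(s)-f_i(t)|^2=\mathcal G(f(s),f(t))^2=|\zeta(f(s))-\zeta(f(t))|^2$, and the pointwise bound $|Df_i|\le|D(\zeta\circ f)|$ follows a.e.

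The real gap is in producing the local continuous selections. Your proposed mechanism, ``tracking the optimal permutation in $\mathcal G$'' relative to a basepoint $f(t_0)$, is neither well-defined nor continuous in general: when points of $f(t_0)$ coincide the minimizing permutation is not unique, and even when $f(t_0)$ has distinct points the optimal-to-basepoint labeling can jump as $t$ varies across a collision time (e.g.\ $f(t)=\llb t\rrb+\llb -t\rrb$ with basepoint at $t_0\ne 0$). You then invoke the local isometry property of $\zeta$ to resolve this, but that property concerns only the equality of $\mathcal G$-distances and $\R^N$-distances near a point; it provides no mechanism for lifting a path in $A_Q(\R^n)$ to $(\R^n)^Q$. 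What is actually required is a path-lifting property for the quotient $\pi:(\R^n)^Q\to A_Q(\R^n)$ through its branch locus, and this needs its own argument---in \cite{revisit} it is obtained by a different route (a direct construction with an inductive splitting of $f$ near points where not all values coincide, or equivalently a discretize-and-limit argument). Once that lifting is established, your gluing step and derivative bound complete the proof.
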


Two more definitions will be useful in the following.

\begin{dfn} We say a multiple-valued function $f:\Omega \subset \R^m \rightarrow A_Q(\R^n)$ is \emph{affinely approximatable} at $x_0\in \Omega$ if there exist affine maps $T_i: \R^m \rightarrow \R^n$ for $i=1,...,Q$ so that:

\begin{enumerate}

\item $\mathcal G (f(x_0), \sum \llb T_i(x)\rrb ) =o(|x-x_0|)$

\item If $f_i(x_0) = f_j(x_0)$, then $L_i=L_j$.

\end{enumerate}
\end{dfn}

\begin{dfn}
A \emph{branch point} for a multiple-valued function $f: \Omega \rightarrow A_Q(\R^n)$ is a point of discontinuity for the function $\sigma(x)=\Ha^0(\spt(f(x)))$.
\end{dfn}

\begin{rmk} For a Dirichlet minimizer the set of branch points is at most of Hausdorff dimension $m-2$, and, in case $m=2$, it is locally finite. If $x\in \Omega$ is not a branch point for a Dirichlet minimizing $f$, then in some neighborhood around $x$, there exists a harmonic selection for $f$, and such a point is called a \emph{regular point} for $f$. Note this definition allows the possibility that at a regular point one of the selection functions for $f$ may still have a critical point where the rank of its differential is less than $\min\{ m,n\}$. 
\end{rmk}

\section{A Multiple-Valued Plateau Problem}

In this section, we will prove Theorem \ref{thm:main}. Notice that for $K=1$, $k_1=1$, this theorem reduces to the classical case proven by Douglas and Rado, see \cite{Douglas} and \cite{Rado}.

The naive idea is the following: take a minimizing sequence and apply a version of the Courant-Lebesgue Lemma to an appropriate subsequence to get a set of maps which are weakly monotonic on the boundary and then appeal to lower semi-continuity of Dirichlet energy under weak convergence. Unfortunately, the Courant-Lebesgue Lemma requires the so called ``three point condition.'' In the classical case, this condition is obtained by ``normalizing" the functions by precomposing with an automorphism of the disk. In our context, the problem with this approach is that we cannot a priori simultaneously normalize all of the boundary functions--the simplest explanation for our method around this is when $K=2$. We will show that, if there are certain kinds of branch points within the disk for each element of a minimizing set, then normalization of one of the boundary functions automatically normalizes the other. In the absence of these branch points, we may normalize separately.

To do this, we require two lemmas. The first lemma relies upon the following (slight) generalization of two Theorems of W. Zhu, found in \cite{WZhu2dim}.

%Two dimensional multiple-valued dirichlet minimizing functions.

\begin{thm}
\label{thm:Zhu}
Let $\epsilon >0$ and suppose an energy minimizing $f\in W^{1,2} ((1+\epsilon)\D, A_Q(\R^n))$ is strictly defined and has precisely one branch point in $\D$ at the origin. Then there exists $j \leq Q$ and $k_1, k_2, ..., k_j \in \mathbb N$ so that $\kappa := \sum k_i \leq Q$ and harmonic functions $f_1,...f_{k_j},f_{Q-\kappa},f_{Q-\kappa+1},...,f_Q:\D \rightarrow \R^n$ so that, for all $s\in \D$.

\begin{equation*}
f(s) = \sum_{i=1}^Q \left(    \sum_{\zeta=s^{k_i}} \llb f_i(\zeta)\rrb\right)
\end{equation*}

\noindent where, to suppress notation, we set $k_{\ell} = 1$ for $\ell \geq j$.

\end{thm}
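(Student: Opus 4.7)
The strategy is to exploit the fact that $\D \setminus \{0\}$ contains no branch points of $f$, so $f$ admits local harmonic selections there, and then to glue these selections using a monodromy analysis around the origin.

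On the punctured disk, $f$ is regular everywhere, so each point has a neighborhood on which $f = \sum_{\ell=1}^Q \llb g_\ell \rrb$ for harmonic maps $g_\ell$. Fix a base point $s_0 \in \D \setminus \{0\}$ and analytically continue the $Q$ local sheets along loops encircling the origin: the resulting monodromy is a single permutation $\sigma \in S_Q$. Decomposing $\sigma$ into disjoint cycles, label the nontrivial cycles with lengths $k_1, \ldots, k_j$, set $\kappa = \sum_{i=1}^{j} k_i$, and note that the remaining $Q - \kappa$ sheets are monodromy-fixed, hence define globally single-valued harmonic maps on $\D \setminus \{0\}$.

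For each nontrivial cycle of length $k_i$, I would unwrap the corresponding $k_i$ sheets using the covering map $\pi_{k_i} : \D \setminus \{0\} \to \D \setminus \{0\}$, $\zeta \mapsto \zeta^{k_i}$. A loop upstairs encircles the origin exactly $k_i$ times downstairs, which is precisely enough to permute the cycle back to its starting configuration. Consequently, pulling back the $k_i$ sheets via $\pi_{k_i}$ produces a well-defined single-valued harmonic map $f_i : \D \setminus \{0\} \to \R^n$. Setting $k_\ell = 1$ for $\ell > j$ and letting $f_{j+1}, \ldots$ denote the harmonic maps associated to the length-one cycles, a short computation inverting the unwrapping yields
\begin{equation*}
f(s) \;=\; \sum_{i} \sum_{\zeta^{k_i} = s} \llb f_i(\zeta) \rrb \qquad \text{for every } s \in \D \setminus \{0\}.
\end{equation*}

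The final step is to extend each $f_i$ across the puncture. Since $f$ is strictly defined and $\alpha$-H\"older continuous on a neighborhood of the closed disk (by Almgren's interior regularity, which applies because $f$ is minimizing on the larger domain $(1+\epsilon)\D$), its sheets have values uniformly bounded near $0$, and hence so does each $f_i$. The classical removable singularity theorem for bounded harmonic functions then extends $f_i$ harmonically to all of $\D$. Continuity of $f$ at the origin forces the $k_i$ sheets in any cycle to share a common value there, which matches the extended value $f_i(0)$, so the displayed identity extends to all $s \in \D$. The main obstacle is the monodromy step: one must verify that the local harmonic selections continue coherently along loops and that the resulting permutation does not depend on the chosen loop representative. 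This ultimately rests on the regular point structure for Dirichlet minimizers recalled in the preliminaries and on the hypothesis that $0$ is the \emph{only} branch point of $f$ in $\D$.
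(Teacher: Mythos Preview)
The paper does not supply its own proof of this statement; it is quoted, without argument, as ``a (slight) generalization of two Theorems of W.~Zhu'' and then used as a black box in the proof of Claim~\ref{claim:modification}. Your monodromy-and-unwrapping outline is the standard route to such local structure theorems for two-dimensional $Q$-valued Dirichlet minimizers and is essentially what underlies Zhu's original argument: decompose the deck permutation around the sole branch point into disjoint cycles, pull each length-$k_i$ cycle back through the holomorphic cover $\zeta\mapsto\zeta^{k_i}$ to obtain a single-valued harmonic sheet $f_i$ on $\D\setminus\{0\}$, and then remove the singularity at the origin using the boundedness that comes from interior H\"older continuity of $f$ on $(1+\epsilon)\D$.

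One point is worth making explicit before you call the argument complete. For the continuation of local sheets along paths in $\D\setminus\{0\}$ to be unambiguous you need that whenever two local harmonic selections agree at a regular point they agree on a full neighborhood, so that the ``sheets'' really form a covering space and the monodromy is a well-defined permutation. This follows because the cardinality function $\sigma(x)=\mathcal H^0(\spt f(x))$ is integer-valued and, by hypothesis, continuous on the connected set $\D\setminus\{0\}$, hence constant there; coinciding sheets therefore coincide identically on that component, and the monodromy acts on the distinct sheets with multiplicities carried along passively. With that sentence inserted, and with the observation that harmonicity is preserved under precomposition with the holomorphic map $\zeta\mapsto\zeta^{k_i}$, your argument goes through.
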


The next lemma is elementary, but, to the authors's knowledge, isn't explicitly stated in the available literature.

\begin{comment}
There is a much, much easier proof of this fact, at least when $A$ is an annulus (which suffices for the remainder of the paper) involving harmonic extensions and what you used to call the SBV selection--think polygonal paths. (OR just make a cut in the disk and go from there to get a selection--the boundary data requirements mean that there can't be any of what Mike calls monodromy. 
\end{comment}

\begin{lemma}
\label{lemma:sepcont}
Suppose $\Omega\subset \R^m$ is path connected and locally path connected and that $f: \Omega \rightarrow A_Q(\R^n)$ is continuous. Additionally, suppose that $f$ satisfies the following for some $\alpha >0$:

\begin{enumerate}
\item $\mathcal H^0(\mbox{\emph{spt}}(f(x)))=Q$ for all $x\in A$
\item $\{ (x,y) \in \Omega \cross \R^n \ | \ y\in \spt\{f(x)\} \}$ has $K$ connected components $E_1,...,E_K$ so that $d(E_i,E_j) > \alpha$ for $i\neq j$.
\end{enumerate}

\noindent  Then $f$ admits a continuous decomposition, that is, there are integers $1', 2',...K'$ so that $\sum i'=Q$, continuous functions $f_j : \Omega \rightarrow A_{j'}(\R^n)$ so that $f = \sum \llb f_i \rrb$. Moreover, if $\Omega$ is a region in $\R^n$ and $f\in W^{1,p}(\Omega,A_Q(\R^n))$ for some $1<p<\infty$, then each $f_i \in W^{1,p}(\Omega, A_{i'}(\R^n))$.

\end{lemma}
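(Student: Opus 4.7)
The plan has two parts: first build a continuous decomposition $f = \sum_i \llb f_i \rrb$ via a matching argument that exploits the separation of the components, then promote continuity to $W^{1,p}$ regularity by realizing ``restriction to a component'' as a locally $1$-Lipschitz operation on $A_Q(\R^n)$.

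For the continuous decomposition, I first observe that the hypothesis of $\alpha$-separation in $\Omega\cross\R^n$ forces the fibers to be $\alpha$-separated in $\R^n$: if $(x,y_1)\in E_i$ and $(x,y_2)\in E_j$ with $i\ne j$, then $|y_1-y_2|=\|(x,y_1)-(x,y_2)\|>\alpha$. For each $x$ and each $i$ set $S_i(x)=\{y\in\spt(f(x)):(x,y)\in E_i\}$ and $n_i(x)=\#S_i(x)$, so $\sum_i n_i(x)=Q$ by assumption (1). The key claim is that $n_i$ is locally constant on $\Omega$. Given $x_0$, continuity of $f$ yields a neighborhood $U$ of $x_0$ on which both $|x-x_0|<\alpha/3$ and $\mathcal G(f(x),f(x_0))<\alpha/3$. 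Any optimal permutation realizing $\mathcal G(f(x),f(x_0))$ pairs each $y\in\spt(f(x))$ with some $y_0\in\spt(f(x_0))$ at distance less than $\alpha/3$, hence $\|(x,y)-(x_0,y_0)\|<\alpha$, which forces $(x,y)$ and $(x_0,y_0)$ into the same component $E_i$. Thus the matching respects the partition, giving $n_i(x)=n_i(x_0)$, and by connectedness each $n_i\equiv i'$ is a fixed integer with $\sum i'=Q$. Setting $f_i(x):=\sum_{y\in S_i(x)}\llb y\rrb\in A_{i'}(\R^n)$, the same matching argument yields $\mathcal G(f_i(x),f_i(x_0))\le\mathcal G(f(x),f(x_0))$ locally, so each $f_i$ is continuous, and $f=\sum_i\llb f_i\rrb$ holds by construction.

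For the Sobolev part, I would realize the component restriction as a locally $1$-Lipschitz map between Almgren spaces and then compose with $f$. Fix $x_0$ and let $W_j$ be the open $(\alpha/3)$-neighborhood of $S_j(x_0)$ in $\R^n$; these are pairwise disjoint with mutual separation $\alpha/3$. On the $\mathcal G$-open set $\mathcal U\subset A_Q(\R^n)$ consisting of those $T$ with $\spt(T)\subset\bigcup_j W_j$ and $\#(\spt(T)\cap W_j)=j'$, the restriction $\Pi_i(T):=T\restr W_i$ is well-defined into $A_{i'}(\R^n)$ and $1$-Lipschitz on sufficiently small $\mathcal G$-balls within $\mathcal U$, since the optimal permutation between two nearby elements of $\mathcal U$ is forced to pair points in $W_i$ only with points in $W_i$. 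By continuity of $f$, shrinking $U$ we may arrange $f(U)\subset\mathcal U$ and contained in such a small ball, whereupon $f_i|_U=\Pi_i\circ f|_U$. Using the Lipschitz retract of Theorem \ref{thm:embedding}, the map $\zeta\circ\Pi_i\circ\zeta^{-1}$ defined on $\zeta(\mathcal U)$ extends to a globally Lipschitz map of Euclidean spaces; composing with $\zeta\circ f\in W^{1,p}(U,\R^N)$ gives $\zeta\circ f_i\in W^{1,p}(U)$ with $|D(\zeta\circ f_i)|\le C|D(\zeta\circ f)|$ pointwise almost everywhere. Covering $\Omega$ by such neighborhoods yields $f_i\in W^{1,p}(\Omega,A_{i'}(\R^n))$ globally.

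The hard part is the last step of the second paragraph: turning the \emph{local, metric-space} Lipschitz property of $\Pi_i$ into a genuinely global Lipschitz map of Euclidean spaces that composes with the extrinsic Sobolev data. This is routine given Theorem \ref{thm:embedding}, but it is the one place where the extrinsic formulation of $W^{1,p}$ forces a small additional argument beyond what is needed in a purely metric-valued setting; everything else reduces to continuity of the $\mathcal G$-matching and connectedness of $\Omega$.
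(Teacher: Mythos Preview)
Your argument is correct, and in both halves it differs from the paper's route.

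For the continuous decomposition, the paper works with the graph map $Gf$ and, given two points $x,y\in\Omega$, invokes the one-dimensional selection theory along a path $c$ from $x$ to $y$ to see that each selection function for $Gf\circ c$ stays inside a single $E_i$; this shows the fiber-counts $i'$ are well-defined. Continuity of the $f_i$ is then proved by a separate $\epsilon$--$\delta$ argument using $\mathcal G(Gf(x),Gf(y))<\min(\epsilon,\alpha/3)$. Your approach is more direct: you show $n_i$ is locally constant via the optimal matching (no appeal to path selections) and read off continuity of $f_i$ from the same local inequality $\mathcal G(f_i(x),f_i(x_0))\le\mathcal G(f(x),f(x_0))$. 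You lose nothing and avoid quoting the 1D selection result.

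For the Sobolev part, the paper slices: it restricts $Gf$ to a.e.\ line parallel to a coordinate axis, uses the one-dimensional Sobolev selection on each line, matches it with the continuous decomposition, and then invokes the ACL characterization of $W^{1,p}$ from Evans--Gariepy to conclude $f_j\in W^{1,p}_{\mathrm{loc}}$, upgrading to global $W^{1,p}$ via the integrability of $|Df|$. Your approach realizes $f_i$ locally as a Lipschitz post-composition $\Pi_i\circ f$ and applies the chain rule; this yields the pointwise bound $|D(\zeta\circ f_i)|\le C(Q,n)\,|D(\zeta\circ f)|$ for free and patches immediately to a global statement. Either method works; yours is more intrinsic, the paper's is more hands-on.

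One small correction: the Lipschitz retract $\rho$ of Theorem~\ref{thm:embedding} is not the right tool for the extension step, since $\rho$ lands in all of $\zeta(A_Q)$ whereas $\Pi_i$ is only defined on the open subset $\mathcal U$. What you actually want is McShane (or Kirszbraun) extension of the Lipschitz map $\zeta_{i'}\circ\Pi_i\circ\zeta_Q^{-1}$ from its domain $\zeta_Q(B)\subset\R^N$ to all of $\R^N$, with values in $\R^{N'}$; the rest of your paragraph then goes through verbatim.
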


\begin{proof}

 Let 
 
 \begin{equation*}
 Gf(z) = \ds\sum_{y\in \spt(f(z))} \llb (z, y) \rrb
 \end{equation*}
 
 \noindent be the graph map associated to $f$. Note that if $f$ is continuous (or Sobolev), so too is $Gf$. Let $i'$ be the number of elements of $E_i \cap\spt(Gf(z))$. We will show that $i'$ is indendent of $z \in \Omega$.
 
Let $x, y\in \Omega$ be given and let $c:[0,1] \rightarrow \Omega$ be a curve with $c(0)=x$ and $c(1) = y$. Since assumptions (1) and (2) guarantee that $f$ has no branch branch points on $c([0,1])$, well known selection theory (see, for example the beginning chapters of \cite{revisit}) gives a unique selection for $Gf\circ c$ as 

\begin{equation*}
Gf\circ c = \sum \llb g_i \rrb.
\end{equation*}

We see that, if $g_i(0) \in E_\ell$, then $d(g_i(t),E_k) > \alpha$ for all $k\neq \ell$ by assumption two. Therefore, $g_i(1) \in E_\ell$, proving $i'$ is well defined.

Thus for $y\in \Omega$, we may put:

\begin{equation*}
f_i(y) = \sum_{(y,p)\in E_i} \llb p \rrb.
\end{equation*}

To see that each $f_i$ is continuous, given $x\in A$ and  $\epsilon > 0$, we choose $\delta$ so that $|x - y| < \delta$ implies $\mathcal G (Gf(x),Gf(y)) < \min(\epsilon, \tfrac{\alpha}{3} )$.  Let $Gf(x) = \sum \llb (x,p_i)\rrb$ and $Gf(y) \sum \llb (y,q_i) \rrb$ be labeled so that:

\begin{equation*}
\mathcal G(Gf(x), Gf(y) )^2 =\sum_i ||x-y||^2 + ||p_i - q_i ||^2.
\end{equation*}

\noindent Then, in particular, for each $i$, $||x-y||^2 + ||p_i - q_i ||^2  < \alpha^2$. So, $(x,p_i) \in E_j$ implies that $(y,q_i) \in E_j$, which implies that $\mathcal G(f_j(x) , f_j(y) ) < \epsilon$, so each $f_j$ is continuous.

Finally we show that each $f_j$ must be Sobolev as follows: for a.e. point $x$ in the interior of $\Omega$, if $\ell(x)$ is a line parallel to the unit axis of $\R^n$ passing through $x\in\R^n$, then $Gf|_{\ell(x)}$ is Sobolev on some neighborhood $U\subset \ell(x)$ of $x$. This implies, by Proposition 1.2 of \cite{revisit} that there exists a Sobolev selection on $U$. However, since each $f_i$ is continuous  we must have that the Sobolev selection corresponds to a Sobolev selection for each $f_i|_U$.  Hence by Theorem 2  of \cite[\S 4.9.2]{EandG} each $f_j \in W^{1,2}_{loc}(\Omega, A_{j'}(\R^n))$, and $f\in W^{1,2}(\Omega, A_Q(\R^n))$ allows us to conclude that $f_j \in W^{1,2}(\Omega,A_{j'}(\R^n))$ for all $j$.

\end{proof}

Our ultimate goal is to apply the following generalization of the Courant-Lebesgue Lemma to the boundary data of a minimizing sequence of multiple-valued functions. For the result in the classical setting, see \cite[\S 4.3 Theorem 3]{Hildmin}.

\begin{thm}
\label{thm:courant}
Suppose $\Gamma_1,...\Gamma_K$ are  closed, oriented Jordan curves as in the statement of Theorem \ref{thm:main} and that $\{f^i_k\}_{k\in \mbb N}$ are sequences of monotone continuous mappings $f^i_k: S^1\rightarrow \Gamma^i$. Define $F_k : S^1 \rightarrow A_Q(\R^n)$ by:

\begin{equation*}
F_k(s)=\sum_{i} \sum_{\zeta = s^{k_i}}\llb f^i_k(\zeta) \rrb
\end{equation*} 

\noindent  and suppose that the Dirichlet minimizing extensions of the maps $F_k$, denoted by $\td F_k: \D \mapsto A_Q(\R^n)$ are such that

\begin{center}
\Dir$(\td F_k;\D) \leq M$
\end{center}

\noindent for some $M\in\R$ independent of $k$, then, for each $i$, the family $\{f^i_k\}_{k\in \mbb N}$ is equicontinuous if they satisfy a uniform three point condition:

\begin{center}
$f^i_k(\alpha^i_j) = \beta^i_{k,j} $ for $j=1,2,3,$, $i=1,...,Q$
\end{center}

\noindent for some distinct points $\alpha^i_j\in S^1$ and $\beta^i_{k,j} \in \Gamma^i$ so that $\beta^i_{k,j} \rightarrow \beta^i_j$ holds, for $\beta^i_j$ three distinct points of $\Gamma_i$.

\end{thm}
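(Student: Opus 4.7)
The plan is to reduce to the classical Courant--Lebesgue lemma through Almgren's embedding, and then to transfer the resulting oscillation bounds to each $f_k^i$ via the wrapped boundary structure and the three-point condition. To start, I would set $u_k := \iota \circ \td F_k \in W^{1,2}(\D,\R^N)$, where $\iota$ denotes the embedding of Theorem~\ref{thm:embedding} (renamed from the paper's $\zeta$ to avoid a clash with the root variable), so that $\int_\D |\nabla u_k|^2 = \Dir(\td F_k;\D) \leq M$. The classical Courant--Lebesgue lemma applied to $u_k$ then provides, for each $z_0 \in \bar\D$ and each $\delta \in (0,1)$, a radius $\rho_k = \rho_k(z_0) \in (\delta,\sqrt\delta)$ with $\mathrm{osc}(u_k; C_{\rho_k}) \leq \sqrt{8\pi M/\log(1/\delta)}$, where $C_{\rho_k} := \D \cap \partial B_{\rho_k}(z_0)$. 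Passing back through $\iota^{-1}$ via the bi-Lipschitz clause of Theorem~\ref{thm:embedding}(2), and invoking the continuity of $\td F_k$ up to $\bar\D$ (standard for Dirichlet minimizers with continuous boundary data), this yields $\mathcal G(F_k(w_k),F_k(w_k')) \leq \eta(\delta) := C(Q,n)\sqrt{8\pi M/\log(1/\delta)}$ for the two boundary endpoints $w_k,w_k' \in S^1$ of $C_{\rho_k}$, with $\eta(\delta) \to 0$ as $\delta \to 0$.

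Next, I would set $d_0 := \min_{i\neq j}\mathrm{dist}(\Gamma_i,\Gamma_j) > 0$ and take $\delta$ small enough that $\eta(\delta) < d_0/\sqrt Q$, so that the $\mathcal G$-optimal pairing between $F_k(w_k)$ and $F_k(w_k')$ must match $\Gamma_i$-values only with $\Gamma_i$-values. Within each $i$ this reduces to a permutation $\sigma_k^i \in S_{k_i}$ of the $k_i$-th roots with $\sum_{j=1}^{k_i}|f_k^i(\zeta_{w_k}^j) - f_k^i(\zeta_{w_k'}^{\sigma_k^i(j)})|^2 \leq \eta(\delta)^2$, writing $\zeta_w^j$ for the $j$-th $k_i$-th root of $w$. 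Weak monotonicity of $f_k^i$ forces the images to respect cyclic order on $\Gamma_i$, so by a standard non-crossing argument $\sigma_k^i$ must be a cyclic shift. The crux is ruling out nontrivial shifts: a shift by $s \neq 0$ pairs roots angularly separated on $S^1$ by $\approx 2\pi s/k_i$ (mod $2\pi$) while keeping their $f_k^i$-values $\eta(\delta)$-close. The Lipschitz Neighborhood Retract hypothesis on $\Gamma_i$ ensures that any two $\eta$-close points on $\Gamma_i$ split it into one short arc of length $\psi(\eta) \to 0$ and one long arc of length $\geq |\Gamma_i| - \psi(\eta)$. Combined with the three-point condition---the three fixed points $\alpha_l^i$ can lie in at most three of the $k_i$ short $S^1$-arcs associated to the shift-by-$s$ pairs---monotonicity forces $f_k^i$ on each remaining pair to trace only the short $\Gamma_i$-arc; summing these short traces contradicts the fact that $f_k^i$ is a monotonic surjection onto $\Gamma_i$, ruling out $s \neq 0$.

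With $\sigma_k^i = \mathrm{id}$ established, each geometric pair satisfies $|f_k^i(\zeta_{w_k}^j) - f_k^i(\zeta_{w_k'}^j)| \leq \eta(\delta)$, and these pairs are the endpoints of short $S^1$-arcs $J_k^{i,j}$ around the $k_i$ roots of $z_0$. A classical single-valued argument combining monotonicity of $f_k^i$ with the three-point condition---as in \cite[\S 4.3 Theorem 3]{Hildmin}, available because for $\delta$ small each $J_k^{i,j}$ contains at most one $\alpha_l^i$---upgrades the endpoint bound into an oscillation bound $\mathrm{osc}(f_k^i;J_k^{i,j}) \leq \psi(\eta(\delta))$. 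Given $\varepsilon > 0$, I would then choose $\delta$ so that $\psi(\eta(\delta)) < \varepsilon$ and set $\mu := \delta/k_i$. For $\zeta_1,\zeta_2 \in S^1$ with $|\zeta_1 - \zeta_2| < \mu$, applying the construction above with $z_0$ equal to the $k_i$-th power of the $S^1$-midpoint of $\zeta_1,\zeta_2$ places both points inside a common $J_k^{i,j_0}$ of length $\geq 2\rho_k/k_i \geq 2\delta/k_i > 2\mu$, yielding $|f_k^i(\zeta_1) - f_k^i(\zeta_2)| < \varepsilon$ uniformly in $k$.

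The principal obstacle is the exclusion of nontrivial cyclic shifts for $\sigma_k^i$. Without equicontinuity already in hand, a weakly monotonic $f_k^i$ could \emph{a priori} wrap $\Gamma_i$ in a way---narrow-waist Jordan curves, near-multiple coverings---that makes a shifted pairing $\mathcal G$-optimal, exactly the degenerate behavior motivating Definition~\ref{dfn:wrapped} and the examples of Section~\ref{section:degen}. The resolution depends essentially on the interaction of the Lipschitz-neighborhood-retract hypothesis on $\Gamma_i$ with the three-point condition; small values of $k_i$ (in particular $k_i \leq 3$, where the three-point pigeonhole on $k_i$ arcs degenerates) should be handled separately using direct Jordan-curve arguments from the classical proof.
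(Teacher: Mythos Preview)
Your route differs from the paper's. The paper does not work with the $\mathcal G$-pairing at two boundary points; instead it first invokes Lemma~\ref{lemma:nbd} to produce an annular neighborhood $A$ of $S^1$, independent of $k$, on which the graph of every $\td F_k$ splits into $K$ components uniformly bounded apart. Lemma~\ref{lemma:sepcont} then yields a continuous $W^{1,2}$ decomposition $\td F_k|_A = \sum_i \llb F_k^{(i)} \rrb$ with each $F_k^{(i)}: A \to A_{k_i}(\R^n)$ taking values in the neighborhood $U_i$ of $\Gamma_i$, and the classical argument of \cite[\S4.3, Theorem~3]{Hildmin} is applied to the embedded pieces $\zeta\circ F_k^{(i)}$ one curve at a time. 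Your approach bypasses Lemma~\ref{lemma:nbd} entirely and performs the curve-by-curve separation only at the two endpoints $w_k,w_k'$ via the threshold $\eta(\delta)<d_0/\sqrt Q$; this is more self-contained but forces you into the permutation analysis you describe. The paper's route has the structural advantage that Lemma~\ref{lemma:nbd} is needed independently in the proof of Theorem~\ref{thm:main} (and in Remark~\ref{rmk:addit}), so once it is established Theorem~\ref{thm:courant} becomes essentially a corollary; your route trades that dependency for extra combinatorics at the boundary.

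The cyclic-shift obstacle you isolate is genuine, but note that it is not peculiar to your approach: even after the paper's decomposition one is left with a $k_i$-valued boundary map $s\mapsto\sum_{\zeta^{k_i}=s}\llb f_k^i(\zeta)\rrb$, and passing from its $\mathcal G$-equicontinuity to equicontinuity of the single-valued $f_k^i$ involves exactly the same pairing ambiguity. The paper absorbs this step into ``identical methods'' without further comment. Your proposed resolution via the LNR hypothesis and three-point pigeonholing is plausible in outline, but as you acknowledge it is incomplete for small $k_i$ and the ``summing short traces contradicts surjectivity'' step needs a sharper accounting of which $S^1$-arcs map to short versus long $\Gamma_i$-arcs; neither your write-up nor the paper's sketch gives a fully detailed treatment here.
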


\noindent Once we guarantee that we can find an $A$ (independent of $k$) for which the conditions of Lemma \ref{lemma:sepcont} hold for each $\td F_k$, the proof of Theorem \ref{thm:courant} follows from identical methods of \cite[\S 4.3 Theorem 3]{Hildmin} after applying Lemma \ref{lemma:sepcont} and analyzing the embeddings $\zeta \circ f_i$ for the decomposition guaranteed by the lemma. Finding such a neighborhood is the content of the next proposition.

\begin{lemma}
\label{lemma:nbd}
Suppose that $\{F_k\}_{k\in \mathbb N} \subset W^{1,2}(\D, A_Q(\R^n))\cap C(\overline{\D}, A_Q(\R^n))$ is a sequence of Dirichlet minimizing functions with a selection for the trace of $F_k$ given, for $s\in S^1$ by

\begin{equation*} 
F_k(s)= \sum_{i} \sum_{\zeta = s^{k_i}}\llb f^i_k(\zeta) \rrb
\end{equation*}

\noindent  for $f_k^i : S^1 \rightarrow \Gamma_i$ continuous, weakly monotonic maps onto curves as in Theorem \ref{thm:main}. Further suppose that each $F_k$ is affinely approximatable a.e. on the interior of $\D$ and that

\begin{equation*}
\sup_k \Dir(F_k;\D)=M < \infty.
\end{equation*}

\noindent  Then there exists a neighborhood $A\subset \D$ of $S^1$ so that the conditions of Lemma \ref{lemma:sepcont} are satisfied for each $F_k|_{A}$.

Further, if each $\Gamma_i$ is a LNR of some open neighborhood $U_i$ (where we assume that $U_i \cap U_j = \phi$ for $i\neq j$), we may guarantee that: $F_k|_A \in \sum_1^K k_i \llb U_i \rrb$, where $\sum_1^K k_i \llb U_i \rrb$ is the set of all $y\in A_Q(\R^n)$ so that $\mathcal H^0 (\spt(y) \cap U_i )=k_i$ for all $i$.

\end{lemma}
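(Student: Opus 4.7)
The plan is to combine a Courant--Lebesgue bound for the embedded maps $\zeta\circ F_k$ with the LNR retractions and the Dirichlet-minimizing hypothesis, in order to propagate the structured boundary data into a uniform annular neighborhood of $S^1$.

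I would begin by fixing scales: let $\delta_0:=\min_{i\ne j}d(\Gamma_i,\Gamma_j)>0$ and pick $\eta\in(0,\delta_0/4)$ small enough that the neighborhoods $V_i:=N_\eta(\Gamma_i)\subset U_i$ are pairwise disjoint. Next, apply the classical Courant--Lebesgue lemma to $u_k:=\zeta\circ F_k:\D\to\R^N$, whose Dirichlet energy $\int_\D|\nabla u_k|^2=\Dir(F_k;\D)$ is bounded by $M$. This produces, for any $\eps>0$, any $z_0\in S^1$, and any $k$, a radius $r_k(z_0)\in(\eps,\sqrt\eps)$ for which the oscillation of $u_k$ on the arc $C_{r_k(z_0)}(z_0)\cap\D$ is at most $\sqrt{4\pi M/\log(1/\eps)}$; the local distance-preserving and global bi-Lipschitz features of $\zeta$ from Theorem \ref{thm:embedding} translate this into an $\eps$-dependent oscillation bound $\omega(\eps)\to 0$ for $F_k$ itself. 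I would then fix $\eps$ so small that $\omega(\eps)<\eta$, set $\rho:=\eps$, and take $A:=\{1-\rho<|z|<1\}$ as the candidate annular neighborhood.

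For each $z_0\in S^1$ and each $k$, the crescent $D_k(z_0):=B_{r_k(z_0)}(z_0)\cap\D$ contains $B_\rho(z_0)\cap\D$, so these crescents cover $A$. On the $S^1$-arc $\alpha_k\subset\partial D_k(z_0)$ the trace lies in $\sum_i k_i\llb\Gamma_i\rrb$ by hypothesis on the boundary data, while on the transverse arc $\gamma_k=C_{r_k(z_0)}(z_0)\cap\D$ the endpoints lie on $S^1$ and the oscillation is at most $\omega(\eps)<\eta$, forcing $F_k(\gamma_k)\subset\sum_i k_i\llb V_i\rrb$ with the same cluster count $k_i$ in each $V_i$ as on $\alpha_k$ (because $\eta<\delta_0/4$ keeps the clusters separated).

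The main obstacle is to propagate this cluster structure from $\partial D_k(z_0)$ into the interior. I would attack this by a comparison argument: if $F_k$ took some value outside $\sum_i k_i\llb V_i\rrb$ on $D_k(z_0)$, I would construct a strictly better competitor $\tilde F_k$ equal to $F_k$ outside $D_k(z_0)$ and, on $D_k(z_0)$, obtained by post-composing $F_k$ with a Lipschitz retraction onto $\sum_i k_i\llb V_i\rrb$ built coordinate-wise from the LNR retractions $U_i\to\Gamma_i$ and lifted through Almgren's embedding, exploiting the fact that the stratum $\sum_i k_i\llb V_i\rrb$ is locally Euclidean of the form $\prod_i A_{k_i}(V_i)$. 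This retraction preserves the boundary values on $\partial D_k(z_0)$ but strictly decreases Dirichlet energy on any excursion outside $\sum_i k_i\llb V_i\rrb$, contradicting Dirichlet minimality. Combined with the boundary analysis this forces $F_k(A)\subset\sum_i k_i\llb V_i\rrb$, and the separation $d(V_i,V_j)\ge\delta_0-2\eta>0$ yields the $K$ uniformly separated components demanded by Lemma \ref{lemma:sepcont}. Finally, the pointwise condition $\mathcal H^0(\spt F_k(z))=Q$ on $A$ follows, after possibly shrinking $A$, from the isolated-branch-point structure of two-dimensional Dirichlet minimizers supplied by Theorem \ref{thm:Zhu}.
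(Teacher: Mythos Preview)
Your approach differs substantially from the paper's, and the central comparison/retraction step has a genuine gap.

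The paper never invokes Dirichlet minimality through a competitor argument. Instead it uses minimality only via the quantitative interior regularity estimate
\[
\int_{B_r(x)}\|DF_k\|^2 \;\le\; r^{2/Q}\int_{\D}\|DF_k\|^2
\]
from \cite{revisit}, which yields a \emph{uniform} (in $k$) H\"older bound on each inner circle $\delta S^1$. This is paired with a direct slicing estimate: integrating $\|\nabla F_k\|$ along horizontal segments in a thin rectangular region $R_\delta$ near $S^1$, then applying Fubini and H\"older, gives $\mathcal G\bigl(F_k(\alpha x),\,F_k(\gamma)\bigr)\le C\,\Ha^2(R_\delta)^{1/2}/\sin(\Delta\theta/2)$ for a short boundary arc $\gamma\ni x$. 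Combining the two, every annular value $F_k(\alpha x)$ is shown to be within $\epsilon/3$ of some actual boundary value $F_k(y_k)$, and the cluster structure then follows immediately from the separation of the $\Gamma_i$.

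Your retraction argument, by contrast, breaks down at the point where you need to push an ``excursion'' back into $\sum_i k_i\llb V_i\rrb$. The LNR retractions you have are maps $U_i\to\Gamma_i$, not onto $V_i$; applying them would move the values on the transverse arc $\gamma_k$ (which lie in $V_i\setminus\Gamma_i$) and destroy the matching condition needed for a valid competitor. If instead you try to retract onto $\overline{V_i}$, you need a $1$-Lipschitz nearest-point projection, which requires the excursion to stay within the reach of $\Gamma_i$ --- precisely what you are trying to prove. More seriously, nothing in your setup rules out the possibility that inside the crescent two sheets of $F_k$ \emph{exchange} clusters: one value migrates continuously from near $\Gamma_1$ to near $\Gamma_2$ while another does the reverse. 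Midway both values are far from every $\Gamma_i$, and no coordinate-wise retraction can decide which cluster each point ``belongs to''. The multi-valued maximum principle is too weak to exclude this (the paper itself stresses this in Section~\ref{section:degen}), so the comparison step cannot be completed as written.

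There is a second, smaller gap: obtaining $\mathcal H^0(\spt F_k(z))=Q$ on $A$ by ``shrinking $A$'' after invoking isolated branch points is not uniform in $k$, since the branch locus of $F_k$ depends on $k$ and may approach $S^1$ along the sequence. The paper sidesteps this by proving directly that every annular value is $\epsilon$-close to a boundary value; the required separation of graph components then comes for free from the disjointness of the $\Gamma_i$.
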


\begin{rmk}
Note that the differentiability condition is immediately satisfied by any sequence of functions which are Dirichlet minimizing for their boundary data, as in \cite{bigreg} or \cite{Delint}.
\end{rmk}

\begin{proof}

Let $\epsilon < \ds \min_{i\neq j} \text{dist}(\Gamma_i,\Gamma_j)$. We will show that for $\delta <1$ close enough to one and any $x\in S^1$,  $\delta \leq \alpha \leq 1$, $\mathcal G(F_k(\alpha x),F_k( \gamma )) < \frac{\epsilon}{3}$, where $\gamma$ is some arc of $S^1$, independent of $k$, and $x\in\gamma$.  By our assumptions on the boundary data of each $F_k$, this will guarantee the assumptions of Lemma \ref{lemma:sepcont}. We will assume that $M=1$, which is possible simply by dividing the functions appropriately.

 Let $1 \geq  \alpha \geq \delta \geq \frac{1}{2}$ where $\delta$ will be chosen shortly, and $0 < \Delta \theta \leq \frac{\pi}{2}$ be given. Let $a \in \alpha\cdot S^1$ be given, and let $\hat a=\frac a {||a||}$. Viewing $\R^2$ as $\C$, we lose no generality in assuming that $\hat a =1$. Consider the region $R_\delta$ defined by:

\begin{equation*}
R_\delta= \{ (x,y) | \ y\in [-\delta\sin(\frac{\Delta \theta}{2}),\delta\sin(\frac{\Delta\theta}{2})], \mbox{and} \ \delta^2 \leq x^2+y^2 \leq 1 \}.
\end{equation*}

\noindent For each $y_0\in [-\delta\sin(\frac{\Delta \theta}{2}),\delta\sin(\frac{\Delta \theta}{2})]$, let $L(y_0)$ be the horizontal line segment $(\R\cross \{y_0\}) \cap R_\delta$. Denote by $x_{\delta}(y_0)$ and $x_1(y_0)$ the $x$ coordinates of where this line intersects $\delta S^1$ and $S^1$, respectively.

Since each $F_k$ is Sobolev, for a.e.  line $\ell$ in $\C$, we have that $F_k|_{(\ell \cap \D)}$ is absolutely continuous for all $k$ and that $F_k$ is affinely approximatable at $\Ha^1$ a.e. point on $\ell$. This implies, by combining Proposition 1.2 of \cite{revisit} and Theorem 6.4 of \cite{selection}\footnote{While the statement of the theorem in \cite{revisit} requires that $F_k$ be Lipschitz, the proof is identical provided that $F_k$ admits an absolutely continuous selection, which is guaranteed by Proposition 1.2 of \cite{revisit}. } that for a.e. $y_0\in [-\delta \sin(\frac{\Delta \theta}{2}),\delta\sin(\frac{\Delta \theta}{2})]$
\begin{equation*}
\mathcal G(F_k(x_{\alpha}(y_0),y_0), F_k(x_1(y_0),y_0))\leq \int_{L(y_0)\cap \{ x \ | \ \delta \leq ||x|| \leq 1\} } ||\nabla F_k || \, d\Ha^1 \leq \int_{L(y_0)} ||\nabla F_k || \, d\Ha^1.
\end{equation*}

\noindent Integrating this expression with respect to the variable $y$ gives:

\begin{equation*}
\int_{-\delta\sin(\frac{\Delta \theta}{2})}^{\delta\sin(\frac{\Delta \theta}{2})}\mathcal G(F_k(x_{\\delta}(y),y), F_k(x_1(y),y)) \, dy\leq \int_{-\delta\sin(\frac{\Delta \theta}{2})}^{\delta\sin(\frac{\Delta \theta}{2})}\left(\int_{L(y)} ||\nabla F_k || \, d\Ha^1\right) \, dy.
\end{equation*}

\noindent The Fubini Theorem and the H\"older Inequality implies:
\begin{equation}
\label{eq:rhsbound}
\int_{-\delta\sin(\frac{\Delta \theta}{2})}^{\delta\sin(\frac{\Delta \theta}{2})}\left(\int_{L(y)\cap R_\delta} ||\nabla F_k || d\Ha^1\right) \, dy\leq M^{\frac{1}{2}} \left(\Ha^2(R_\delta)\right)^{\frac1 2}.
\end{equation}

\noindent On the other hand, if $\gamma_1$ and $\gamma_2$ denote the portions of $\alpha\cdot S^1$ and $S^1$ contained in $R_\delta$, we see that:

\begin{equation*}
{2\delta\sin(\frac{\Delta \theta}{2})} \mathcal G(F_k(\gamma_1),F_k(\gamma_2)) \leq \int_{-\delta\sin(\frac{\Delta \theta}{2})}^{\delta\sin(\frac{\Delta \theta}{2})}\mathcal G(F_k(x_{\alpha}(y),y), F_k(x_1(y),y)) \, dy
\end{equation*}

\noindent which, combined with Equations (\ref{eq:rhsbound}) and (\ref{eq:choice}) gives the bound:

\begin{equation}
\label{eq:neccest}
\mathcal G(F_k(\gamma_1),F_k(\gamma_2)) \leq \frac{\Ha^2(R_\delta)^{\frac1 2} }{2\sin(\frac{\Delta \theta}{2})\delta} 
\end{equation}

 (Recall that we've assumed $M=1$.) By rotation, this bound may be established for any arc of length equal to $\Delta \theta$. 
  
Next, one easily checks that $\Ha^2(R_\delta)\rightarrow 0$ as $\delta \rightarrow 1$ so we may choose (for fixed $\Delta \theta$) $\delta$ close enough to one so that:

\begin{equation}
\label{eq:choice}
\frac{\Ha^2(R_\delta)^{\frac1 2} }{2\sin(\frac{\Delta \theta}{2})\delta} \leq \frac{\epsilon}{6}.
\end{equation}
 
We now observe that, if the arc length were chosen to be small enough so that the variation of all of the functions $F_k$ on $\delta \gamma$ was smaller than $\frac \epsilon 6$, we could guarantee that, for each $e^{i\psi} \in \gamma$:

\begin{equation*}
\mathcal G(F_k(\delta e^{i\psi}, F_k(\gamma) ) < \frac \epsilon 3.
\end{equation*}

Obtaining such a bound for smaller arc length is slightly more complex than it seems at first, since $\delta$ must be chosen \emph{after} the arc-length is already fixed, but it can be done via the following method of choosing $\delta$ and the arc length $\ell$.

Recall that, in Theorem 9 of \cite{revisit}, the following inequality is shown for Dirichlet minimizing functions on the disk:

\begin{equation}
\label{eq:crucial}
\int_{B_r(x)} ||DF_k||^2 \, d\h^2 \leq r^{\frac 2 Q} \int_{\D} ||Df||^2 \, d\h^2 
\end{equation}

\noindent Whenever $x\in \D$ and $r < 1-||x||$. 

Following the arguments of Section 3.2 of \cite{LinPDE}, we see that this implies, some constant $C$ which depends only on $n$ and $Q$ so that:

\begin{equation*}
x,y \in \delta \D \mbox{ with } ||x-y||< \frac{1-\delta}{2} \Rightarrow \mathcal G (F_k(x),F_k(y))\leq C ||x-y||^{\frac 1 Q}.
\end{equation*}

Let $\epsilon$ be as above and choose $n\in \mbb N$ so that $\frac{2\cdot7^2}{ \epsilon^2 } \leq n $.  For $\delta$ sufficiently close to one,  $\ell = n\left(\frac{1-\delta}2\right)$ defines an arc within the ranges given in the preceding arguments. 
 
Further, we may also choose $\delta$ close enough to one to guarantee the additional inequality:

\begin{equation}
\label{eq:choice2}
n C \left( \frac{1-\delta}{2} \right) ^{\frac 1 Q} \leq \frac \epsilon 6.
\end{equation}
 
 Then, using the asymptotics:

\begin{equation*}
 \h^{ 2}(R_\delta)  \approx \ell(1-\delta)
 \end{equation*}
 \begin{equation*}
2\sin(\frac{\Delta \theta}{2} ) \approx \ell
 \end{equation*}
 
 \noindent and our choices of $n$, $\delta$ and $\ell$, one checks that the above integral estimates imply (possibly upon choosing $\delta$ even closer to one):

\begin{equation*}
\mathcal G(F_k(\gamma),F_k(\delta \gamma)) \leq \frac{\epsilon}{6}.
\end{equation*}

\noindent for any arc $\gamma \subset S^1$ with length $\ell$. 

Let $\gamma$ be such an arc. For any two points $x, y\in \delta \gamma$, by choice of $\ell$, we may find a chain :

\begin{center}$\{x=x_1,x_2,...,x_n=y\}\subset \delta \gamma$ for which $||x_i - x_{i+1}|| \leq \frac{1-\delta}{2}$. 
\end{center}

\noindent Then:

\begin{equation*}
\mathcal G(F_k(x),F_k(y) ) < \sum \mathcal G(F_k(x_i),F_k(x_{i+1})) \leq n C \left(\frac{1-\delta}{2}\right)^{\frac 1 Q} \leq \frac{\epsilon}{6}
\end{equation*}

\noindent where the last inequality follows from Inequality (\ref{eq:choice2}).

To summarize, we have shown the following statement: 

\begin{center}
{\it If $\gamma$ is an arc of $S^1$ of length $\ell$ and $x \in \gamma$, $k\in \mbb N$, there exists $y_k\in \gamma$ so that $\mathcal G(F_k(\delta x) , F_k( y_k) ) \leq \frac{\epsilon}{3}$.} 
\end{center}

One checks that this statement proves the initial goal, since choosing $\delta \leq \alpha$ only improves the bounds used above. By further constraining $\epsilon$ to also satisfy $\epsilon < \ds \min_i \text{dist}(\Gamma_i, \partial U_i)$, we may obtain the final statement in the Lemma, completing the proof.

\end{proof}

\begin{remark}
\label{rmk:addit}
Note that Equation (\ref{eq:neccest}) a posteriori gives the additional property, where $\{f_i\}$ is the selection from Lemma \ref{lemma:sepcont}:  There is an $\alpha >0$ such that for all $0 < \alpha \leq \beta \leq 1$, $\beta S^1 \subset A$ and , and all $i$, $\mathcal G (f_i( \beta e^{i\theta}) ,f_i(\gamma)) < \epsilon$ where $\gamma$ is an arc of $S^1$ with length bounded by $C(1-\beta)$ with $e^{i\theta} \in \gamma$.
\end{remark}

Finally, we recall the following interpolation lemma, Lemma 2.15 of \cite{revisit}.

\begin{lemma}[Interpolation Lemma]
\label{lemma:interpolation}
There is a constant $C=C(m,n,Q)$ with the following property. Let $r, \ \epsilon > 0, \ g\in W^{1,2}(\partial B_r, A_Q(\R^n))$ and $f\in W^{1,2}(\bd B_{r(1-\epsilon}), A_Q(\R^n))$. Then, there exists $h\in W^{1,2}(B_r\setminus B_{r(1-\epsilon)}, A_Q(\R^n))$ so that $h|_{\bd B_{r(1-\epsilon)}} = f$, $h|_{\bd B_{r}} = g$ and so that $\dir(h,B_r\setminus B_{r(1-\epsilon)}) $ is less than or equal to:

\begin{equation}
C\epsilon r [\dir(g,\partial B_r) + \dir(f,\partial B_{r(1-\epsilon)})] + \frac{C}{\epsilon r} \int_{\partial B_r} \mathcal G(g(x), f((1-\epsilon)x))^2 \, dx
\end{equation}

\end{lemma}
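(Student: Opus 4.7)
The plan is to construct $h$ explicitly by radial linear interpolation after lifting to $\R^N$ via the embedding $\zeta$ from Theorem \ref{thm:embedding}, and then composing with the Lipschitz retraction $\rho$. Concretely, for $x = s\omega$ with $s \in [r(1-\epsilon), r]$ and $\omega \in S^{m-1}$, I would set
\begin{equation*}
\tilde h(s\omega) \;=\; \frac{s - r(1-\epsilon)}{\epsilon r}\,\zeta\bigl(g(r\omega)\bigr) \;+\; \frac{r - s}{\epsilon r}\,\zeta\bigl(f(r(1-\epsilon)\omega)\bigr)
\end{equation*}
and define $h = \zeta^{-1}\circ\rho\circ\tilde h$. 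Because $\rho$ retracts $\R^N$ onto $\zeta(A_Q(\R^n))$ and is the identity there, the boundary traces are $h|_{\partial B_r}=g$ and $h|_{\partial B_{r(1-\epsilon)}}=f$. Since $\lip(\rho)$ and $\lip(\zeta^{-1}|_{\zeta(A_Q(\R^n))})$ are bounded by a constant depending only on $n$ and $Q$, it suffices to bound $\dir(\tilde h)$ in $\R^N$ and absorb Lipschitz constants into the final $C$.

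The energy computation is done in polar coordinates, where $|D\tilde h|^2 = |\partial_s \tilde h|^2 + s^{-2}|\nabla_\omega \tilde h|^2$. For the radial part, using $\lip(\zeta)\le 1$,
\begin{equation*}
|\partial_s \tilde h(s\omega)|^2 \;\le\; \frac{1}{(\epsilon r)^2}\, \mathcal G\bigl(g(r\omega),\, f(r(1-\epsilon)\omega)\bigr)^2,
\end{equation*}
and integrating $s^{m-1}\,ds\,d\omega$ over the annulus, the $s$-integral contributes a factor of order $\epsilon r^{m}$, which combines with the prefactor $(\epsilon r)^{-2}$ and the conversion $d\omega = r^{-(m-1)}\,dx$ between spherical and surface measures on $\partial B_r$ to yield precisely $(\epsilon r)^{-1}\int_{\partial B_r}\mathcal G\bigl(g(x),\, f((1-\epsilon)x)\bigr)^2\,dx$, up to a constant.

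For the tangential part, $\nabla_\omega \tilde h$ is a convex combination of the spherical gradients of $\zeta(g(r\omega))$ and $\zeta(f(r(1-\epsilon)\omega))$, whose squared norms are $r^2|D_{\mathrm{tan}}(\zeta\circ g)|^2$ and $r^2(1-\epsilon)^2|D_{\mathrm{tan}}(\zeta\circ f)|^2$ respectively. Integrating $s^{-2}s^{m-1}\,ds$ over $[r(1-\epsilon),r]$ produces a factor of order $\epsilon r^{m-2}$ (covering the $m=2$ case via $\log\bigl(1/(1-\epsilon)\bigr)\sim\epsilon$), and after the analogous conversion between spherical and surface measures this balances to the required $C\epsilon r\,\bigl[\dir(g,\partial B_r) + \dir(f,\partial B_{r(1-\epsilon)})\bigr]$. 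The only genuinely delicate point is careful bookkeeping of the radial factors $r$ versus $r(1-\epsilon)$ so that the final constant depends only on $m,n,Q$; once the interpolation formula is in place, the argument is otherwise mechanical.
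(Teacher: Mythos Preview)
The paper does not give its own proof of this lemma; it is merely recalled from \cite{revisit} (De Lellis--Spadaro, Lemma 2.15). Your argument is exactly the standard one found there: lift to $\R^N$ via $\zeta$, interpolate linearly in the radial variable, retract via $\rho$, and estimate the radial and tangential contributions separately in polar coordinates. The bookkeeping you outline is correct and yields the stated bound with constants depending only on $m,n,Q$.

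One minor caveat: your handling of the tangential term in dimension $m=2$ via $\log\bigl(1/(1-\epsilon)\bigr)\sim\epsilon$ is only a uniform estimate for $\epsilon$ bounded away from $1$. Since the lemma is invoked in this paper solely with $\epsilon=\epsilon_n\to 0$ (in the proof of Claim \ref{claim:modification}), this causes no trouble here; if you wanted the full range $\epsilon\in(0,1)$ you would need to be slightly more careful, but that is not required for the present application.
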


\begin{proof}[Proof of Theorem \ref{thm:main}] We present a proof in the case where $K=2$, and $k_1$ and $k_2$ are given so that $k_1 + k_2 = Q$. The more general case follows by identical arguments, there are simply more cases to analyze. For more details, see Remark \ref{rmk:higherq}.

Let $\{F_k\}_{k=1}^\infty\subset W^{1,2}(\D, A_2(\R^n))$ be a minimizing sequence so that:

\begin{enumerate}
\item $F_k |_{S^1} (s)= \sum_{\zeta=s^{k_1}} \llb f_k(\zeta) \rrb +  \sum_{\zeta=s^{k_1}}\llb g_k (\zeta)\rrb $ for $f_k : S^1 \rightarrow \Gamma_1$ and $g_k : S^1 \rightarrow \Gamma_2$ monotone. 
\item $\dir(F_k, \D ) \rightarrow \ds \inf_{G\in \mathcal A} \dir(G,\D)$ 
\end{enumerate}

We may additionally assume that each $F_k$ is Dirichlet minimizing for its boundary data--since replacing each $F_k$ with a Dirichlet minimizer will only decrease the energy.

Standard compactness arguments allow us to extract a subsequence (which we won't relabel) which weakly converges in $W^{1,2}$ to a Dirichlet minimizing function $F$. Furthermore, by Arzela-Ascoli, uniform Holder continuity on any compact $\Omega \subset \subset \D$ and a diagonal argument applied to a compact exhaustion of $\D$,  we may guarantee that the $f_k$ converge uniformly on compact subsets of the disk.

Let $A$ be the neighborhood of $S^1$ provided by Lemma \ref{lemma:nbd}. For simplicity, we assume that $A$ is a disk of inner radius $\delta$. For reasons that will be elucidated shortly, we will mostly be interested in the branch points of $F$ within $\D \setminus A$, so  denote by $\Sigma_k$ the branch set of $F_k$ within the compact set $\D \setminus A$ and $\Sigma_0$ the branch set of $F$ within $\D \setminus A$. Since interior branch points for 2-dimensional multiple-valued functions are isolated, we know that $F$ may have only finitely many branch points in $\D \setminus A$. Further, since the $F_k$'s converge to $F$ uniformely on $\D \setminus A$, we may, by throwing out finitely many elements of the sequence $\{F_k\}$, also assume that $\mathcal H^0(\Sigma_k) \geq \mathcal H^0 (\Sigma_0)=\ell$. Put $\Sigma_0 =\{x_1,...,x_\ell\}$.

For the remainder of the proof it will be crucial to have more control on the location and quantity of points in $\Sigma_k$--the following claim shows that we can do this without changing our sequence's properties substantially.

\begin{claim}
\label{claim:modification}
We may modify the sequence $\{F_k\}$ to obtain a new sequence, $\{\td F_n \}$ still converging uniformly on compact sets to $F$ so that if $\td \Sigma_n$ is the branch set of $\td F_n$ in $\D \setminus A$, then $\mathcal H^0 (\td \Sigma_n) = \ell$ and so that $\dir(\td F_n) = \dir( F_{k(n)}) +o(1)$ as $n\rightarrow \infty$ for some increasing sequence $k(n)$.  
\end{claim}

\begin{proof}[Proof of Claim]

Using the uniform convergence of the sequence $\{F_k\}$ on $\D \setminus A$, we may find $r_1,...,r_\ell$ so that the following holds for all large $k$:

\begin{enumerate}
\item $B_{r_i}(x_i) \cap B_{r_j}(x_j) = \phi$ for $i\neq j$. 
\item $\Sigma_k \subset \cup B_{r_i}(x_i)$
\item The image of the graph map of $F_k |_{\partial B_{r_i}(x_i)}$ has the same number of connected components for all $k$.
\end{enumerate}

\noindent For each $k\in \{1,...,\ell\}$, we consider $B_{r_i}(x_i)$ independently. For ease of notation, assume that $x_i=0$, and, as before, let $B_s$ be the open ball of radius $s$ about the origin.

Let the image of the graph map of $F_k |_{\partial B_{r_i}}$ have $\ell$ connected components.

Then there are positive integers $\ell_{i,j}$ for $j=1,...,\ell$ so that $\sum_j\ell_{i,j}=Q$   and there are $\ell$ continuous functions $f_{k,i,j}$ defined on $\partial B_{r_i}$ mapping to $A_{\ell_{i,j}}(\R^n)$  so that:
\begin{equation}
F_k|_{\partial B_{r_i}} =\sum_j  \llb f_{k,i,j} \rrb
\end{equation}

\noindent and so that $\mathcal G (f_{k,i,j} ,f_{k,i,m})$ is bounded away from zero independent of $k$ for $j\neq m$. Since $F$ is Dirichlet minimizing, has only one branch point within $B_{r_i}(x_i)$, and has $\ell$ connected components on $\bd B_{r_i}$,  we conclude via Theorem \ref{thm:Zhu} that there are $\ell$ harmonic functions $h_j : B_{1} \rightarrow \R^n$ so that:

\begin{equation*}
 F|_{B_{r_i}}(z) = \sum_j \sum_{\zeta^{\ell_{i,j}}=r_i z} \llb h_j(\zeta) \rrb.
 \end{equation*}
 
\noindent Uniform convergence implies that (up to relabeling),  
  
  \begin{equation*}
  f_{k,i,j}(z) \rightarrow \text{Trace}\left(\sum_{\zeta^{\ell_{i,j}}=r_i z} \llb h_j(\zeta) \rrb\right )
  \end{equation*}

  \noindent uniformly as $k\rightarrow \infty$. For arbitrary $0 < \epsilon < r_i$, we may apply Lemma \ref{lemma:interpolation} to the following sequence of functions:

\begin{enumerate}
\item $F_k|_{\partial B_{r_i}} =\sum_j \llb f_{k,i,j} \rrb$
\item $F_\epsilon(s)=F(\frac{s}{1-\epsilon})$ defined on $\bd B_{r_i(1-\epsilon)}$.
\end{enumerate}

\noindent We obtain a sequence $h_{k,\epsilon} \in W^{1,2} (B_{r_i}\setminus B_{r_i(1-\epsilon)}, A_Q(\R^n))$ with bounds on $\dir(h_{k,\epsilon},B_{r_i}\setminus B_{r_i(1-\epsilon)}) $; in particular:

\begin{equation}
\label{eq:bounds}
\dir(h_{k,\epsilon},B_{r_i}\setminus B_{r_i(1-\epsilon)})  \leq C\epsilon r [\dir(F_k,\partial B_{r_i}) + \dir(F_{\epsilon},\partial B_{r_i(1-\epsilon)})] + \frac{C}{\epsilon r} \int_{\partial B_{r_i}} \mathcal G(F_k(x), F_\epsilon((1-\epsilon)x))^2 \, dx.
\end{equation}

Notice that in the above equation, as $k \rightarrow \infty$, the right hand side tends to:

\begin{equation}
2C\epsilon r \dir(F_\epsilon,\partial B_{r_i}) \leq M \epsilon
\end{equation}

\noindent where $M$ is independent of $\epsilon$. 

Let $\epsilon_n$ be any sequence tending to zero, and further suppose that $0 < \epsilon_n < \min(r_i,\epsilon)$. For each $n$, choose an increasing sequence $k(n)$ so that:

\begin{equation*}
C\epsilon_n r [\dir(f_{k(n)},\partial B_{r_i}) + \dir(h_{\epsilon_n},\partial B_{r_i(1-\epsilon_n)})] + \frac{C}{\epsilon_n r} \int_{\partial B_{r_i}} \mathcal G(f_{k(n)}(x), h_{\epsilon_n}((1-\epsilon)x))^2 \, dx < 2M\epsilon.
\end{equation*}

\noindent Now, define $\td F_n$ by:

 \begin{displaymath}
   \td F_n(x) = \left\{
     \begin{array}{llr}
       F_{k(n)} (x) & : & x \in \D \setminus B_{r_i}\\
       h_{k(n),\epsilon_n} (x) & : & x\in B_{r_i} \setminus B_{r_i(1-\epsilon)}\\
       F (\frac{x}{1-\epsilon_n}) & : & x\in B_{r_i(1-\epsilon)}\\
     \end{array}
   \right.
\end{displaymath}

\noindent One may check that $\dir(\td F_n, B_{r_i}) = \dir( F_k(n),B_{r_i}) +o(1)$ holds, and further that $\td F_n$ has only one branch point within $B_{r_i}$ for sufficiently large $n$--namely $0$. 

By applying the above construction to each $B_{r_i}(x_i)$, we obtain a new sequence $\{\td F_k\}$ satisfying the conclusions of the Claim.

\end{proof}

\begin{remark}
\label{rmk:local}
Note that the new functions $\td F_k$ are no longer Dirichlet minimizing for their given boundary data, however, they are \emph{locally} Dirichlet minimizing in some neighborhood of their branch points which is independent of $k$.  
\end{remark}

There is a sort of dichotomy between the two types of branch points that may occur in $\Sigma_k$, which we now analyze.

\begin{dfn}
Call $\{x_i^k\}_{k\in \mbb N}$ \emph{connecting} if there is a smoothly bounded neighborhood $U$ so that:

\begin{enumerate}
\item for all large $k$, $U\cap \Sigma_k = \{x_1^k\}$ and,  
\item $U$ is smoothly contractible to $x_1^k$ for all $k$ as in (1),
\item $\partial U$ is a smooth curve whose intersection with $A$ has a smooth arc $\Gamma$ of positive $\Ha^1$ measure
\item  If $GF_k$ denotes the graph map (as in the proof of Lemma \ref{lemma:sepcont}), then $\spt(GF_k(\partial U)$ has a connected component $\Lambda$ so that $(\Lambda \cap \spt(GF_k(\Gamma)))\cap B_i\neq \phi$ for $i=1,2$. 
\end{enumerate}
\end{dfn}

The proof now reduces to two cases.

\emph{Case I:} Suppose first that for every $j$, $\{x_j^k\}_{k\in \mbb N}$ are not connecting. We will show that the decomposition of $F_k$ on $A$ extends to a decomposition on all of $\D$, which then allows us to independently normalize the boundary data to satisfy the three point condition of Theorem \ref{thm:courant}. 

To see this, choose $x_0 \in \D \setminus (A \cup \Sigma_0)$, and let $\gamma: [0,1]\rightarrow A$ be a path so that $\gamma(0) = x_0$ and $\gamma(1)=z \in A$ so that $\gamma([0,1]) \subset \D \setminus (A \cup \Sigma_0)$. Standard selection theory (see, for example the beginning chapters of \cite{revisit}) gives a unique selection 

\begin{equation*}
F_k\circ \gamma = \sum \llb f^i_k \rrb
\end{equation*}

\noindent Partition $F_k(x_0) = \sum \llb f^i_k(0) \rrb$ into two multiple-valued functions by:
\begin{equation*}
F_k(x_0) = \sum_{(z,f^i_k(1)) \in B_1} \llb f^i_k(0) \rrb + \sum_{(z,f^i_k(1)) \in B_2} \llb f^i_k(0) \rrb =:\llb F_k^1(x_0)\rrb +\llb F_k^2(x_0)\rrb
\end{equation*}

One checks that the fact that no element of $\Sigma_0$ is connecting implies that this choice is independent of path and therefore well-defined. Since each $F_k$ is continuous, this decomposition may be extended to the finite set $\Sigma_0$ by continuity. It readily follows that each $F_k^i$ is Sobolev. Further, the decomposition on $A$ gives (up to possible relabeling)

\begin{equation*}
F_k^1 |_{S^1} (s)= \sum_{\zeta=s^{k_1}} \llb f_k(\zeta) \rrb
\end{equation*}
\begin{equation*}
F_k^2 |_{S^1} (s)= \sum_{\zeta=s^{k_2}} \llb g_k(\zeta) \rrb
\end{equation*}

\noindent as in (1) at the beginning of the proof. Therefore, by picking two sequences of conformal disk automorphisms, $\{\sigma_k\}$ and $\{\psi_k\}$ so that $f_k \circ \sigma_k$ and $g_k \circ \psi_k$ satisfies a three point condition, we see that $\llb F_k^1 \circ \sigma_k \rrb+ \llb F_k^2 \circ \psi_k\rrb$ converges to an admissible map $\td F$ as $k\rightarrow \infty$--but since this normalization doesn't affect the energy, lower semicontinuity of the Dirichlet energy gives that $\td F$ is a minimizer, as desired.

\emph{Case II:} Suppose now that $\{x_1^k\}_k$ is a sequence of connecting branch points. We will prove that the boundary data $f_k$ and $g_k$ cannot degenerate in the limit.  Let $\alpha_1, \ \alpha_2,$ and $\alpha_3$ be three distinct points of $S^1$, and let $s_i^k = f_k(\alpha_i)$, $t_i^k = g_k(\alpha_i)$ for $i=1,2,3$. By compactness of $\Gamma_1$ and $\Gamma_2$, we may, upon extracting to a subsequence (which we again will not relabel) suppose that $s_i^k \rightarrow s_i$ and $t_i^k\rightarrow t_i$ for each $i$ as $k\rightarrow \infty$. 

By precomposing with a disk diffeomorphism, we may assume that the set $\{t_1,t_2,t_3\}$  consists of three distinct elements , and hence  that the boundary data $g_k$ converges uniformly to some $g$ by Theorem \ref{thm:courant}. We want to show that the set  $\{s_1,s_2,s_3\}$ also consists of three distinct elements, since then we may apply Theorem \ref{thm:courant} to obtain the result immediately.

We now assume towards a contradiction that $\{s_1, s_2, s_3\}$ are not distinct, say that $s_1 = s_3$. Under these assumptions, by the monotonicity of the $f_k$, and once again, possibly extracting a (still un-relabeled) subsequence  we may select a sequence $\beta^k$ with the following properties:

\begin{enumerate}
\item $\beta^k \rightarrow \beta \notin \{\alpha_1, \alpha_2\}$ as $k\rightarrow \infty$.
\item $f_k(\beta^k) = \td s_3\notin \{s_1, s_2\}$. 
\end{enumerate}

Note that $g$ must be non-constant on one of the arcs $(\beta, \alpha_1)$, or $(\beta, \alpha_2)$, where $(\beta,\alpha_1)$ is the arc that doesn't pass through $\alpha_2$ and vice versa.  We suppose that $g$ is non-constant on $(\beta, \alpha_1)$. Since for each $k$ there are only finitely many  branch points to avoid and their locations are constrained to balls of fixed radii, we may find a smoothly bounded, simply connected $U \subset \D$ satisfying the properties in the definition of connecting branch point, with the additional property that:

\begin{itemize}
\item $\partial U$ is a smooth curve whose intersection with $A$ has an arc $L$ so that $\text{dist}(L \cap (\beta, \alpha_1)) < \frac{\delta}{2}$, where $(\beta, \alpha_1)$ is the arc of $S^1$ connecting $\beta$ and $\alpha_1$ which does not pass through $\alpha_2$.
\end{itemize}

Using the estimate given by Inequality (\ref{eq:neccest}) in the proof of Lemma \ref{lemma:nbd}, we may further assert that if $r_f : U_f \rightarrow \Gamma_i$ is a Lipschitz retract and $\epsilon >0$ is given, we may choose $U$ so that $(\td s^3 \pm \epsilon , s_1 \pm \epsilon ) \subset r_f( F_k(L)\cap U_f)$, where the $\pm$'s are independent of one another. 

 Let $\phi : \D \rightarrow U$ be the conformal map guaranteed by the Riemann Mapping Theorem. Further, by Caratheadory's Theorem (see, for example Theorem 5.5 of \cite{Conway} and the preceeding sections), we see that $\phi$ extends to a homeomorphism  $\varphi: S^1 \rightarrow \partial U$. Then since $\{x_1^k\}$ is a sequence of connecting branch points, we may put

\begin{equation*}
F_k|_{\partial U}(z) = \sum_i \sum_{\zeta^{\ell_i} = \phi^{-1}(z) } \llb h_{k,i} (\zeta \rrb) .
\end{equation*}

\noindent For some integers $\ell_i$ with $\sum \ell_i = Q$ and some continuous functions $h_{k,i} : S^1 \rightarrow \R^n$.  Without loss of generality, we may assume that $h_{k,1}$ is a parameterizing map for $\Lambda$ as in the definition of connecting branch points. Set $h_k = h_{k,i}$. 

 Consider now the maps $\psi_k(z)= (z, h_k(z) )$ for $z\in \D$. Since $\partial U \Subset \D$, uniform convergence on compact sets of the maps $F_k$ guarantees that the curves $\psi_k(S^1)$ converge in the sense of Fr\' echet (see, for example, \cite[\S 4.2]{Hildmin} ) to a curve $\Psi$. Further, since $g_k$ converges uniformly to a non-constant function (by our choice of arc $(\beta, \alpha_1)$), by the comments in Remark \ref{rmk:addit}, the traces of $\psi_k$ are monotone and satisfy the three point condition of Theorem \ref{thm:courant} and thus $\text{Trace}(\psi_k)$ converges uniformly to some $\psi: S^1 \rightarrow \R^n$, which is monotonic onto the curve $\Psi$. Put $\psi(z) = (z, h(z))$. Then, for each $k$, there is a segment of the image curve of $h$ contained in $((\td s_3, s_1))_\delta$ (where this is the interval inside $\Gamma_1$), which follows from Remark \ref{rmk:addit} and so, by applying the retraction $r_f : (\Gamma_1)_\delta \rightarrow \Gamma_1$, we see that this property must persist in the limit, and therefore the $f_k$ restricted to the interval $(\beta, \alpha_1)$ cannot degenerate, and hence we may choose three points and apply Theorem \ref{thm:courant} to obtain convergence to a monotone map.

Finally, notice that we may (possibly upon extracting to another subsequence and utilyzing another diagonal argument) assume that the sequence of renormalized functions weakly converges to some $\td F \in W^{1,2}(\D, A_Q(\R^n))$. Further, Remark \ref{rmk:addit} applied to smaller and smaller neighborhoods paired with uniform convergence on compact sets and uniform convergence on $S^1$, one obtains using a standard $\frac{\epsilon}{3}$ argument that this subsequence converges uniformly to $\td F$ on $\D$, which implies that $\td F$ is admissible, and lower semi-continuity of the Dirichlet Energy then gives the result.

\end{proof}

\begin{rmk}[Generalization to $K >2$] 
\label{rmk:higherq} 
One notices that there is nothing unique for the case where $K=2$, there are simply fewer cases of the branching behavior. Indeed, one first checks that Claim \ref{claim:modification} holds for arbitrary $K$, and then, to guarantee the boundary convergence, one uses an identical argument as presented above, there are simply more cases to be concerned with since there are more possibilities for branching behavior. 
\end{rmk}

\section{Proof of the Regularity Theorem}
\label{section:topology}
In this section we  prove Theorem \ref{thm:conformal} via  methods gleaned from the classical situation for Douglas Minimizers. The proof involves using topological methods and complex analysis to produce a single-valued map from a planar domain with the same minimization properties and boundary data. It relies heavily on the classical results regarding the Douglas problem--see Chapter 8 of \cite{Hildmin}. 

Before we begin, we need a few results from complex analysis. We start by stating a weakened version of a theorem of Bieberbach \cite{bieber}--see also \cite{complex}.

\begin{thm}
\label{thm:bieber}
Let $\Omega\subset \C$ be a bounded domain in the plane bounded by $n$ non-intersecting Jordan curves $\gamma_1,\gamma_2,...,\gamma_n$, and, for each $i$, choose a point $b_i \in \gamma_i$. Then there exists a proper holomorphic mapping $f: \Omega \rightarrow \D$ which is an $n$-to-one branched covering map. Further, since $f$ is proper, $f$ extends continuously to the boundary of $\Omega$, and this extension (which we will also denote by $f$) maps each boundary curve monotonically onto the unit circle and has $f(b_i) =1$ for all $i$. 
\end{thm}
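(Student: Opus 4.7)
The plan is to combine Koebe's uniformization theorem with the Ahlfors extremal problem, and then to adjust parameters within a sufficiently large family of proper holomorphic maps to meet the normalization $f(b_i) = 1$. First, I would apply Koebe's theorem to reduce to the case of a circle domain $D = \mathbb{D} \setminus \bigcup_{i=1}^{n-1} \overline{B(a_i, r_i)}$; the Carath\'eodory extension theorem, applied to each Jordan boundary component separately, ensures that a boundary-normalized map constructed on $D$ pulls back to the desired map on $\Omega$. Next, for a fixed $p \in D$, I would produce a proper degree-$n$ holomorphic map by taking the Ahlfors extremal function: a holomorphic $g : D \to \overline{\mathbb{D}}$ maximizing $|g'(p)|$. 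A normal-families compactness argument produces a maximizer $f_0$, and a standard variational argument forces $|f_0| \equiv 1$ on $\partial D$, so $f_0$ is proper; counting its zeros via the argument principle shows the degree is exactly $n$.

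For the boundary regularity, properness yields $|f_0(z)| \to 1$ as $z \to \partial D$, and since each boundary component is a Jordan curve, Carath\'eodory's boundary extension applies component-by-component to give a continuous extension to $\overline D$. An argument-principle computation on $\partial D$ (with standard orientation) shows that the restrictions $f_0|_{\gamma_i} : \gamma_i \to S^1$ have positive integer degrees $d_i$ summing to $n$; since there are $n$ components, each $d_i = 1$, and a degree-$1$ continuous map $S^1 \to S^1$ is, after reparametrization, a monotone homeomorphism.

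The final and most delicate step is to achieve $f(b_i) = 1$ for all $i$ simultaneously. The Ahlfors construction offers only three real parameters (the pole $p \in D$ plus a post-rotation of $\mathbb{D}$), which is insufficient for $n \geq 4$ conditions. One therefore enlarges the picture to the full space of proper degree-$n$ holomorphic maps $D \to \mathbb{D}$, parametrized by their zero sets subject to the $n-1$ real period conditions which force $\log f$ to be branch-single-valued on $D$; this yields an $n$-parameter family modulo post-composition with $\mathrm{Aut}(\mathbb{D})$. The boundary evaluation
\begin{equation*}
\Phi(f) = \bigl(f(b_1), \ldots, f(b_n)\bigr) \in (S^1)^n
\end{equation*}
depends continuously on these parameters, and a degree-theoretic argument---tracking $\Phi$ as zeros of $f$ are driven toward $\partial D$---shows $(1,1,\ldots,1)$ lies in the image of $\Phi$, producing the claimed $f$. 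This parameter-matching is the main obstacle: the first two paragraphs are essentially classical for the theory of proper holomorphic maps on multiply connected domains, but producing the correct parameter count and ruling out degenerate limit behavior at the boundary of parameter space is the essential technical content of the theorem.
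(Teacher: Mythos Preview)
The paper does not supply a proof of this theorem; it is quoted as a known result of Bieberbach, with a reference to the original paper and a secondary source, and is used as a black box in the proof of Theorem~\ref{thm:conformal}. So there is no ``paper's own proof'' to compare against.

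That said, your outline is broadly the Ahlfors-function route and is plausible through the construction of a proper degree-$n$ map. Two points deserve tightening. First, invoking Carath\'eodory ``component-by-component'' is not literally correct, since that theorem concerns simply connected domains; after your Koebe reduction the boundary components are analytic circles, so the cleaner argument is Schwarz reflection across each circle, which gives holomorphic (hence continuous) extension. Second---and you flag this yourself---the normalization $f(b_i)=1$ for all $i$ is the real content. Your sketch (``a degree-theoretic argument---tracking $\Phi$ as zeros of $f$ are driven toward $\partial D$'') is not yet a proof: you would need to exhibit the parameter space explicitly, verify that $\Phi$ extends continuously to a compactification on which a degree is defined, and compute that degree. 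Bieberbach's original argument instead constructs the map from harmonic measures of the boundary components, which makes the boundary-value normalization transparent and bypasses the topological step entirely; you may find that route more direct if you want a self-contained proof.
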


The above Theorem leads, in a fairly straightforward way, to the following. The main idea is to translate the following Theorem into a problem on the upper half-plane, and then add functions produced via the above with clever selections of points--for full details, see \cite[\S 5]{complex}.

\begin{thm}
\label{thm:genahlfors}
Suppose that $\Omega \subset \C$ is as in Theorem \ref{thm:bieber} and $d_1,...,k=d_n$ are positive integers. Then there exists a proper holomorphic map $f: \Omega \rightarrow \D$ such that:

\begin{enumerate}
\item $f$ is a $d_1+d_2+...+d_n$-to-one branched cover of $\D$
\item $f$ admits a continuous extension to $\overline \Omega$ (which we will also denote by $f$) so that, for each $i$, $f|_{\gamma_i}$ has degree $k_i$.
\end{enumerate}

\end{thm}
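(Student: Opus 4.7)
The plan is to reduce Theorem \ref{thm:genahlfors} to a problem on the upper half-plane $\mathbb{H}$ and then to produce the required map as a carefully chosen sum of Bieberbach maps furnished by Theorem \ref{thm:bieber}. Fixing a M\"obius transformation $T : \D \to \mathbb{H}$ with $T(1) = \infty$, the desired map $f : \Omega \to \D$ corresponds, via $F = T \circ f$, to a proper meromorphic function $F : \Omega \to \overline{\mathbb{H}}$ that is holomorphic on $\Omega$ with $F(\Omega) \subset \mathbb{H}$, extends continuously to the boundary away from the finitely many preimages of $\infty$, and whose winding degree $d_i$ on each $\gamma_i$ equals the number of its simple poles on $\gamma_i$. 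The structural advantage of this reformulation is that $\overline{\mathbb{H}}$ is closed under addition: given meromorphic $F_1, F_2 : \Omega \to \overline{\mathbb{H}}$ with $F_j(\Omega) \subset \mathbb{H}$, the sum $F_1 + F_2$ is again of the same form, with boundary pole set the union of the two summands' pole sets (and pole orders adding at any coincident points).

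I would then choose, for each $i$, $d_i$ distinct points $p_{i,1}, \dots, p_{i,d_i} \in \gamma_i$ as the prescribed boundary poles of $F$. Applying Theorem \ref{thm:bieber} together with a continuity argument on the basepoint selection---as the basepoints move continuously on their respective $\gamma_k$, the preimages $h^{-1}(-1) \cap \gamma_k$ also move continuously, and a winding-number argument shows that any prescribed $n$-tuple in $\gamma_1 \times \cdots \times \gamma_n$ is realizable as the $(-1)$-fiber---I can manufacture Bieberbach maps $h$ whose transforms $T \circ h$ have their $n$ simple boundary poles at any prescribed boundary $n$-tuple. Summing $M = d_1 + \cdots + d_n$ such transforms, with pole placements chosen so that the total pole count on each $\gamma_i$ is exactly $d_i$, produces the desired function $F$, and then $f = T^{-1} \circ F$ yields the proper holomorphic map asserted in Theorem \ref{thm:genahlfors}.

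The main obstacle, and the reason the argument must be done carefully, is the combinatorial bookkeeping required to achieve exactly the prescribed pole multiplicities on each boundary: every individual Bieberbach transform contributes one simple pole to \emph{every} boundary component, not just to the particular $\gamma_i$ whose degree we are trying to increase. Thus the auxiliary pole placements on the "other" boundaries must be coordinated across the summed functions so that the cumulative boundary pole count on each $\gamma_k$ matches $d_k$ exactly, with any coincidences occurring only at points where higher pole order is actually desired. The proof by selection of basepoints on the upper half-plane and the verification that the sum has the required degree and continuous extension properties follow the classical strategy indicated in \cite[\S 5]{complex}.
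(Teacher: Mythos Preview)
Your proposal follows exactly the route the paper sketches (translate to the upper half-plane, sum Bieberbach transforms with cleverly chosen basepoints, and defer to \cite[\S5]{complex} for details), so the strategy is aligned. Two details in your elaboration need correction.

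First, the continuity/winding argument is unnecessary. Theorem~\ref{thm:bieber} already lets you prescribe $h(b_i)=1$ for \emph{any} chosen $b_i\in\gamma_i$, so with $T(1)=\infty$ the boundary poles of $T\circ h$ sit exactly at the prescribed $b_1,\dots,b_n$. (Your mention of $h^{-1}(-1)$ appears to be a slip.)

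Second, and more substantively, your account of the combinatorial mechanism is off. When several summands share a simple boundary pole at the same point $p\in\gamma_i$, the sum acquires a \emph{simple} pole there, not one of higher order: each residue has the form $r\tau$ with $r<0$ (where $\tau$ is the unit tangent at $p$ with $\Omega$ on the left), so residues add and can never cancel. The correct statement is that the degree of $f|_{\gamma_i}$ equals the number of \emph{distinct} boundary poles of $F$ on $\gamma_i$. Thus the ``clever selection of points'' works as follows: take $M\ge\max_i d_i$ summands and, on each $\gamma_i$, place the $M$ basepoints so that they occupy exactly $d_i$ distinct locations; then $F=\sum_j T\circ h_j$ has precisely $d_i$ simple poles on $\gamma_i$, and $f=T^{-1}\circ F$ has boundary degree $d_i$ there. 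Your choice $M=d_1+\cdots+d_n$ is permissible but not minimal; $M=\max_i d_i$ already suffices.
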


\begin{proof}[Proof of Theorem \ref{thm:conformal}]   Let $F$, $\Gamma_1,...,\Gamma_K$ and $k_1,...,k_K$ be as in Definition \ref{dfn:wrapped}. Consider the set $\Gamma(F) = \{ (x,y) \ | \ x\in \D \ y\in\spt(F(x))\}$, the \emph{graph} of $F$ (note that $\Gamma(F)$ is also the multiple-valued image of the map $GF$ from the above sections). 

Notice that, since the boundary data admits a wrapped solution, $F$ can have no branch points on the boundary $S^1$, and thus by Remark \ref{rmk:addit} $F$ has no branch points on some annulus $A$, and hence only finitely many branch points in $\D$. 

Next let $\pi_1$ denotes the projection of $\R^2 \cross \R^n$ onto the $\R^2$ coordinates, and $\pi_2$ denote the projection onto $\R^n$ coordinates. Comments in the above paragraph show that $\pi_1 : \Gamma(F) \rightarrow \D$  is a branched cover.

Further, one readily checks that $\Gamma(F)$ is a Riemann surface with boundary for which any loop is either null-homotopic or homotopic to a concatenation of boundary curves--and so $\Gamma(F)$ is therefore topologically a punctured Riemann Sphere. By Koebe's General Uniformization Theorem (see, for example, \cite{foundit}) that there is a planar domain $\Omega$ with $K$ boundary components and a conformal diffeomorphism $\varphi : \Omega \rightarrow \Gamma(F)$.

By conformal invariance of energy and construction, $\kappa :=\pi_2 \circ \varphi : \Omega \rightarrow \R^n$ has the same energy as (the multiple-valued function) $F$. 

Our goal will be to show that that $\kappa$ is conformal, as this will imply (2) of Theorem \ref{thm:conformal}. (2) paired with the energy/area inequality will then imply (1).

Supposing towards a contradiction, suppose that $\kappa$ is not conformal. Following \cite[\S 8.2 and \S 4.5]{Hildmin}, we find a domain $\td \Omega$ and a diffeomorphism $\sigma : \td \Omega \rightarrow \Omega$ so that $\Dir(\kappa \circ \sigma) < \Dir(\kappa)$. Further, we may assume that $\sigma$ is arbitrarily close to the identity in the uniform topology, and thus that $\td \Omega$ has the same number of boundary components as $\Omega$.

Our next goal is to use $\td \Omega$ and $\sigma$ to produce a competitor for $F$ with less energy, which will contradict $F$'s minimizing property. 

Let $\gamma_1,...,\gamma_k$ be the boundary curves of $\td \Omega$ ordered so that $\kappa \circ \sigma (\gamma_i) \subset \Gamma_i$. Then, apply Theorem \ref{thm:genahlfors} to $\td \Omega$ with $d_i = k_i$ to obtain a $Q$-to-one branched cover $\Phi :\td  \Omega \rightarrow \D$ which extends to a map on the closure of $\td \Omega$ for which the maps $\Phi|_{\gamma_i}$ are monotonic and of degree $k_i$ onto the circle $S^1$. 

We then define a multiple-valued map $G : \D \rightarrow A_Q$ as follows: for $z$ a regular point of the branched cover $\Phi$,

\begin{equation}
G(z) = \ds \sum_{y\in \Phi^{-1}(z) } \llb \kappa \circ \sigma (y) \rrb
\end{equation}

\noindent and extended by continuity to the finitely many branch points of $\Phi$. One readily checks that the energy of $G$ is (since $\Phi$ is holomorphic and extends to a monotonic map on the boundary, and hence conformal except at finitely many interior points) equal to the energy of $\kappa \circ \sigma$, a contradiction, since $G$ is also admissible. Therefore $\kappa$ is conformal, proving (2) and hence (1).

\noindent 

To prove (3), we follow an argument similar to the one presented in \cite[Theorem 3, \S 4.5]{Hildmin}. If one of the $f_i's$ were not a homeomorphism, since it is monotonic it must map some arc $\gamma$ along $S^1$ to a constant $\alpha \in \R^n$. Let $x_0^{k_i}$ be some point on the interior of this arc. We then find a harmonic selection on some small ball $B_r(x_0) \cap \D$ by Lemma \ref{lemma:nbd}. By (1), and since this selection is unique (up to reordering), it must be conformal. Let $F_i$ be the function on $B_r(x_0)\cap \D$ which maps $B_r(x_0) \cap \gamma$ to a constant. By Schwarz reflection and conformality, we obtain that $F_i \equiv \alpha$ and hence, since $\D \setminus \sigma(F)$ is path connected (where $\sigma(F)$ is the set of branch points of $F$), that $\alpha\in \spt(F(z))$ for all $z\in \D$, an obvious impossibility given the separation assumptions on the curves $\Gamma_i$. 

\end{proof}

\section{A Class of Examples}

In this section we produce an example of two curves $\Gamma_1$ and $\Gamma_2$ for which the Plateau solution produced in the above section does not allow for a global selection so that there is necessarily a branch point and the image is connected. Further, we give a method for producing a large number of such examples.

Our last proposition is the following:

\begin{prop}
\label{prop:classex}
Suppose that $V \subset \C \cross \C^n$ is a holomorphic variety for which the projection $\pi (z_1,...,z_{n+1} ) = (z_1)$ is a $Q$ to $1$ cover of the disk $\D$ except for possibly finitely many points on the interior of $\D$, and for which $S^1 \cross \C^n$ intersects $V$ in $Q$ disjoint curves. Then the multiple-valued function $ z\mapsto \pi^{-1} (z) \cap V$ is the Plateau solution for the given curves.
\end{prop}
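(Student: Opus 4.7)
The plan is a calibration argument. The proof combines Wirtinger's inequality for holomorphic subvarieties of $\C^{n+1}$ (which makes $V$ area-minimizing for its boundary), the elementary pointwise bound $|\mathrm{Jac}(g)| \leq \tfrac12 |Dg|^2$ applied sheet-wise to any competitor, and the fact that both inequalities become equalities for $F$ because $F$ is locally represented by holomorphic selections.

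I first check admissibility: since $\pi$ is a $Q$-to-$1$ covering near $S^1$, $V \cap (S^1 \times \Cn)$ decomposes into disjoint real-analytic simple closed curves $C_1, \dots, C_K$, each a $k_i$-fold monotone covering of $S^1$ under $\pi$ with $\sum k_i = Q$, projecting via the second factor to $\Gamma_i$. This yields the decomposition $F|_{S^1}(z) = \sum_i \sum_{\zeta = z^{k_i}} \llb f_i(\zeta) \rrb$ with each $f_i : S^1 \to \Gamma_i$ analytic and monotone, so $F \in \mathcal{A}$. Next, $V$ has local holomorphic selections $\{h_\alpha\}$ away from the finitely many branch points of $\pi|_V$, and the Cauchy-Riemann equations give $|Dh_\alpha|^2 = 2|h_\alpha'|^2$ while the graph area of $h_\alpha$ over $U \subset \D$ equals $\int_U (1 + |h_\alpha'|^2)\,dA$. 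Setting $T_F := V \cap (\bar{\D} \times \Cn)$ and summing over sheets yields
\begin{equation*}
\mathrm{Area}(T_F) = \pi Q + \tfrac12 \Dir(F).
\end{equation*}

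Now let $G \in \mathcal{A}$ be any competitor and let $T_G$ denote the integer rectifiable $2$-current in $\bar{\D} \times \Cn$ associated to the multi-valued graph of $G$, assembled from the almost-everywhere affine approximations of its local selections. Since $F|_{S^1} = G|_{S^1}$ as elements of $A_Q(\Cn)$, we have $\partial T_G = \partial T_F$. The identity $\det(I_2 + A^{\top}A) = 1 + |A|^2 + |a_1 \wedge a_2|^2$ for $2 \times N$ real matrices with rows $a_1, a_2$, combined with the pointwise bound $|a_1 \wedge a_2| \leq \tfrac12 |A|^2$, gives the sheet-wise integrand estimate $\sqrt{1 + |Dg_\alpha|^2 + |\mathrm{Jac}(g_\alpha)|^2} \leq 1 + \tfrac12|Dg_\alpha|^2$, which integrates to
\begin{equation*}
\mathrm{Area}(T_G) \leq \pi Q + \tfrac12 \Dir(G).
\end{equation*}
Since the K\"ahler form $\omega$ on $\C^{n+1}$ is closed and calibrates holomorphic varieties, Stokes' theorem and Wirtinger's inequality give
\begin{equation*}
\mathrm{Area}(T_F) = \int_{T_F} \omega = \int_{T_G} \omega \leq \mathrm{Area}(T_G).
\end{equation*}
Chaining the three displays yields $\Dir(F) \leq \Dir(G)$, so $F$ minimizes Dirichlet energy in $\mathcal{A}$ and is the Plateau solution.

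The main obstacle I expect is rigorously producing the 2-current $T_G$ from the Sobolev multi-valued map $G$ with the correct boundary, together with the sheet-wise area bound. This uses the standard push-forward construction for $Q$-valued Sobolev maps via Lipschitz approximation (as in the Almgren and De Lellis-Spadaro frameworks), where the delicate point is the careful tracking of orientations and multiplicities near $S^1$ so that $\partial T_G$ matches the oriented boundary of $T_F$. An alternative that sidesteps the currents formalism is to pull $\omega$ back directly along the affinely approximable sheets of $G$ and invoke Stokes on a compact exhaustion of $\D$, reducing the calibration step to an integration-by-parts that is visibly valid for $W^{1,2}$ maps.
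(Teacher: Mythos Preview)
Your calibration strategy is the right idea, and in some ways cleaner than the paper's scaling argument, but there is a genuine gap in the step ``Since $F|_{S^1} = G|_{S^1}$ as elements of $A_Q(\Cn)$, we have $\partial T_G = \partial T_F$.'' In the class $\mathcal A$ of Theorem~\ref{thm:main}, competitors are \emph{not} required to agree with $F$ pointwise on $S^1$; they are only required to have boundary data that monotonically parameterizes the same curves $\Gamma_i$. Thus for a general $G\in\mathcal A$ the graph of $G|_{S^1}$ in $S^1\times\C^n$ is a different $1$-cycle from $V\cap(S^1\times\C^n)$, and your Stokes/calibration identity $\int_{T_F}\omega=\int_{T_G}\omega$ fails as written.

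The repair is to compare \emph{image} currents in the target rather than graph currents over the domain. With $F:\D\to A_Q(\C^{n+1})$ given by $z\mapsto\pi^{-1}(z)\cap V$, the push-forward $F_\sharp\llb\D\rrb$ is exactly $\llb V\rrb\restr(\bar\D\times\C^n)$, and for any competitor $G\in\mathcal A$ the push-forward $G_\sharp\llb\D\rrb$ (built from the a.e.\ affine approximations) is a $2$-current in $\C^{n+1}$ whose boundary is the sum of the oriented curves $V\cap(S^1\times\C^n)$, independent of the monotone parameterization. Now the comparison goes through: $F$ is holomorphic sheet-wise, hence conformal, so $\mass(\llb V\rrb)=\tfrac12\Dir(F)$; the pointwise bound $|\mathrm{Jac}\,g_\alpha|\le\tfrac12|Dg_\alpha|^2$ gives $\mass(G_\sharp\llb\D\rrb)\le\tfrac12\Dir(G)$; and Wirtinger/calibration gives $\mass(\llb V\rrb)\le\mass(G_\sharp\llb\D\rrb)$. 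Chaining yields $\Dir(F)\le\Dir(G)$.

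The paper takes a different route precisely to manage this boundary mismatch: it embeds the graph $T_{\lambda G,\D}$ in $\C\times\C\times\C^n$ and then projects out the domain coordinate via $P$, so that $P_\sharp T_{\lambda G,\D}$ lands in $\C^{n+1}$ with the correct boundary. It then needs Lemma~\ref{lemma:mass} to control the mass lost under $P$, and the asymptotic expansion of Theorem~\ref{thm:higherests} (quoted from \cite{higher}) to extract the Dirichlet comparison from the $\lambda^2$ coefficient as $\lambda\to 0$. Your direct approach, once corrected to use push-forwards, avoids both the auxiliary lemma and the scaling limit, and is arguably more transparent; the paper's route has the advantage that the needed current-theoretic facts are already packaged in the cited reference.
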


The proof of the proposition relies on the following elementary lemma.

\begin{lemma}
\label{lemma:mass}
Suppose $\Omega \subset \R^2$ is compact, and that $F: \Omega \rightarrow \R^n$ is injective and Lipschitz. Then, if $\td F (x,y) = (x,y,F(x,y)) \in \R^2 \cross \R^n$,and $\llb\Omega\rrb$ is the standardly-oriented  two-dimensional current in $\R^2$, we have:

\begin{equation*}
\mass (F_\sharp (\llb \Omega\rrb) ) +|\Omega| \leq  \mass(\td F_\sharp(\llb\Omega)\rrb )
\end{equation*}
\end{lemma}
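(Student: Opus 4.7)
\medskip

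\noindent\textbf{Proof plan for Lemma \ref{lemma:mass}.}

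The plan is to reduce the global mass comparison to a pointwise comparison of Jacobians and then to an elementary inequality on $2\times 2$ positive semidefinite matrices. First, since $F$ and $\td F$ are Lipschitz, Rademacher's theorem gives differentiability a.e.\ on $\Omega$, and the area formula yields
\begin{equation*}
\mass\bigl(F_\sharp\llb\Omega\rrb\bigr) \;\leq\; \int_\Omega JF(x)\,dx, \qquad \mass\bigl(\td F_\sharp\llb\Omega\rrb\bigr) \;=\; \int_\Omega J\td F(x)\,dx,
\end{equation*}
where the second identity is an equality because $\td F$ is automatically injective (its first two components are the identity). Since $|\Omega|=\int_\Omega 1\,dx$, the lemma will follow from the pointwise inequality
\begin{equation*}
JF(x) + 1 \;\leq\; J\td F(x) \qquad \text{for a.e.\ } x\in\Omega.
\end{equation*}

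Next I would compute these Jacobians. Writing $A := DF(x)$ as an $n\times 2$ matrix, the Cauchy--Binet formula gives $JF(x) = \sqrt{\det(A^T A)}$, while $D\td F(x) = \begin{pmatrix} I_2 \\ A \end{pmatrix}$ yields $J\td F(x) = \sqrt{\det(I_2 + A^T A)}$. Setting $B := A^T A$, a symmetric positive semidefinite $2\times 2$ matrix with eigenvalues $\lambda_1,\lambda_2 \geq 0$, the required inequality becomes
\begin{equation*}
\sqrt{\lambda_1\lambda_2} + 1 \;\leq\; \sqrt{1 + \lambda_1 + \lambda_2 + \lambda_1\lambda_2}.
\end{equation*}

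Squaring both (nonnegative) sides, this is equivalent to $2\sqrt{\lambda_1\lambda_2}\leq \lambda_1+\lambda_2$, which is just the arithmetic--geometric mean inequality. Integrating over $\Omega$ and combining with the displayed mass identities above yields $\mass(F_\sharp\llb\Omega\rrb) + |\Omega| \leq \mass(\td F_\sharp\llb\Omega\rrb)$, as desired. There is really no hard obstacle here: the only mild subtlety is ensuring the area formula may be applied to $F_\sharp\llb\Omega\rrb$ on the left side as an inequality (which does not even require injectivity of $F$), whereas injectivity would only be needed if one wanted equality on that side, so the hypothesis that $F$ is injective is not essential to the inequality but makes the statement sharp.
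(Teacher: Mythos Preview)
Your proposal is correct and follows essentially the same route as the paper: reduce via the area formula to the pointwise Jacobian inequality $1+\sqrt{\det(A^TA)}\leq\sqrt{\det(I_2+A^TA)}$, then verify this by an elementary $2\times2$ computation (the paper works with the matrix entries of $A^TA$ directly, you diagonalize, but both reduce to $2\sqrt{\lambda_1\lambda_2}\leq\lambda_1+\lambda_2$). Your remark that injectivity of $F$ is not actually needed for the stated inequality is a correct observation not made explicit in the paper.
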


\begin{proof}
One readily checks that, if $a$ and $b$ are non-negative so that:

\[ DF^T \circ DF =\left( \begin{array}{cc}
a & b  \\
c &d  \\
\end{array} \right)\] 

\noindent then,

\[ D\td F^T \circ D\td F =\left( \begin{array}{cc}
1+a & b  \\
c &1+d  \\
\end{array} \right).\]

\noindent Therefore,

\begin{equation*}
1 + \sqrt{ \det (DF^T \circ DF)} \leq \sqrt{\det ( D\td F^T \circ D\td F) } 
\end{equation*}

\noindent which combined with the area formula implies the result. 

\end{proof}

To prove the proposition, we recall two results from \cite{higher}, see Lemma 1.8 and Proposition 2.2 and the definitions preceeding them for more details: 

\begin{thm}
\label{thm:higherests}

Let $f \in W^{1,2}(\Omega, A_Q(\R^n)$ be so that $\mass (T_{f,\Omega}) < \infty$. Then:

\begin{equation*}
\mass (T_{\lambda f, \Omega}) = Q|\Omega| + \frac{\lambda ^2}{2} \dir(f,\Omega) + o(\lambda^2) \ \mbox{as $\lambda \rightarrow 0$}
\end{equation*}

\noindent Further, if $F$ is the associated $Q$-valued function of a holomorphic variety over $\Omega \subset \C$, then:

\begin{equation*}
\mass (\llb V \rrb \restr \Omega \cross \C^n ) = Q |\Omega| + \frac{\dir(F,\omega)}{2}
\end{equation*}

\end{thm}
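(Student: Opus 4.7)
I would prove both assertions by combining the area formula for graphs with Taylor expansion in the first case, and with Wirtinger calibration in the second; in the Plateau context $\Omega\subset\R^2$, so I take $m=2$ throughout. Multi-valuedness is handled by decomposing outside the (codimension-two) branch set and summing over sheets.

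For the first identity, the area formula for a single Lipschitz sheet $g:\Omega\to\R^n$ gives
\begin{equation*}
\mass(T_{\lambda g,\Omega}) \;=\; \int_\Omega \sqrt{\det\bigl(I + \lambda^2\, Dg^T Dg\bigr)}\,dA,
\end{equation*}
which extends to the measurable sheets of a $W^{1,2}$ function by standard density arguments. A pointwise Taylor expansion yields integrand $1+\tfrac{\lambda^2}{2}\operatorname{tr}(Dg^T Dg)+o_x(\lambda^2)$. The key dominating bound is the AM--GM inequality $\det(M)\leq\tfrac{1}{4}\operatorname{tr}(M)^2$ applied to a $2\times 2$ PSD $M$, which rearranges to $\sqrt{\det(I+\lambda^2 M)}\leq 1+\tfrac{\lambda^2}{2}\operatorname{tr}(M)$, yielding
\begin{equation*}
\lambda^{-2}\,\bigl|\sqrt{\det(I+\lambda^2 M)}-1-\tfrac{\lambda^2}{2}\operatorname{tr}(M)\bigr|\;\leq\;\tfrac{1}{2}|Dg|^2\;\in\;L^1(\Omega).
\end{equation*}
Dominated convergence then upgrades the pointwise $o(\lambda^2)$ remainder to an integrated $o(\lambda^2)$ bound, and summing over the $Q$ sheets of $f$ yields $Q|\Omega|+\tfrac{\lambda^2}{2}\dir(f,\Omega)+o(\lambda^2)$.

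For the holomorphic-variety identity, the tool is Wirtinger's theorem: a one-dimensional complex subvariety of $\Omega\cross\C^n$ is calibrated by the standard K\"ahler form $\omega=\tfrac{i}{2}\sum_{j=0}^{n}dz_j\wedge d\bar z_j$, so $\mass(\llb V\rrb\restr\Omega\cross\C^n)=\int_V\omega$. On a regular sheet parametrized by $z\mapsto(z,f_1(z),\dots,f_n(z))$ with holomorphic $f_k$, holomorphicity kills the mixed $d\bar z\wedge dz_k$ contributions and one computes the pullback of $\omega$ to equal $\bigl(1+\sum_k|f_k'(z)|^2\bigr)\,dA$. Summing over the $Q$ sheets of $\pi:V\to\D$ (the finite branch locus being negligible) gives $Q|\Omega|+\int_\Omega\sum_i|f_i'|^2\,dA$, and the Cauchy--Riemann identity $|Df_i|^2=2|f_i'|^2$ converts this into $Q|\Omega|+\tfrac{1}{2}\dir(F,\Omega)$.

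The main technical subtlety lies in the first part: since $|Df|\in L^2$ need not lie in $L^4$, the naive remainder bound $O(\lambda^4|Df|^4)$ fails to integrate. The AM--GM dominating bound above is exactly what makes the dominated convergence argument go through cleanly---no Lipschitz truncation or mollification is required. Modulo this observation, both statements reduce to direct computation.
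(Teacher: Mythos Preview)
The paper does not actually prove this theorem: it is quoted verbatim from \cite{higher} (Lemma~1.8 and Proposition~2.2 there), so there is no in-paper argument to compare against. Your sketch is, however, a correct outline of how one would establish both parts in the planar case $m=2$ relevant here.

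A few remarks. Your AM--GM domination $\sqrt{\det(I+\lambda^2 M)}\le 1+\tfrac{\lambda^2}{2}\operatorname{tr}(M)$ is genuinely a two-dimensional fact: for $m\ge 3$ it fails (take $M=I$), so your explicit restriction to $m=2$ is essential, not merely convenient. The phrase ``extends to the measurable sheets of a $W^{1,2}$ function by standard density arguments'' hides the one nontrivial input, namely that for Sobolev $Q$-valued maps the current $T_{f,\Omega}$ is well defined and its mass is computed by the sheetwise area integrand; this is exactly what \cite{higher} supplies, so you are implicitly invoking the same source the paper cites. For the holomorphic part your Wirtinger/calibration computation is the standard one and matches what underlies the cited result.
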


\begin{proof}[Proof of Proposition \ref{prop:classex}]

Note that $V_\lambda = \{ (z,\lambda w) \ | \ (z,w) \in V\}$is also a holomorphic variety, and, if $F_\lambda$ is the corresponding $Q$-valued function, Theorem \ref{thm:higherests} yields:

\begin{equation*}
\mass (\llb V_\lambda \rrb \restr \D \cross \C^n ) = Q |\D| + \frac{\dir(F_\lambda,\D)}{2} = Q |\D| + \frac{\lambda^2}{2} \dir(F,\D)
\end{equation*}

Let $G : \D \rightarrow \C \cross \C^n$ be any admissible map for $\Gamma_1,...,\Gamma_Q$. Then, note that if $P : \C \cross \C \cross \C^n$ is the projection onto the last components, $\pi (z, w, v) = (w,v)$ for $z,w\in \C$ and $v\in \C^n$ and $\lambda >0 $ is given,

\begin{equation*}
\mass (\llb V_\lambda \rrb \restr \D \cross \C^n ) \leq \mass ( P_\sharp (T_{\lambda G, \D}) )
\end{equation*}

\noindent since complex varieties are absolutely mass minimizing and $G$ is admissible. 

However, using the above inequalities, this amounts to:

\begin{equation*}
 Q |\D| + \frac{\lambda^2}{2} \dir(F,\D) \leq \mass ( P_\sharp (T_{\lambda G, \D}) ).
\end{equation*}

\noindent However, by the definition of the current $T_{\lambda G, \D}$ and Lemma \ref{lemma:mass}, we see that the above gives:

 \begin{equation*}
2 Q |\D| + \frac{\lambda^2 \dir(F,\D)}{2} \leq \mass ( P_\sharp (T_{\lambda G, \D}) )  + Q |\D| \leq   \mass ( T_{\lambda G, \D} ) = Q|\D| + \frac{\lambda ^2}{2} \dir(f,\D) + o(\lambda^2) 
\end{equation*}

\noindent and therefore, 

\begin{equation*}
Q |\D| + \frac{\lambda^2}{2} \dir(F,\D)  \leq  \frac{\lambda ^2}{2} \dir(G,\D) + o(\lambda^2) 
\end{equation*}

\noindent Finally, notice that $\lambda F$ allows for a  local conformal selection, and, if $\td F$ denotes the graph map as in Lemma \ref{lemma:mass}, that $1 + \frac{\lambda ^2}{2} |DF|^2  = \sqrt{\det (D\td F^T \circ \td F )}$, so the area-energy equality for conformal maps, the above gives:

\begin{equation*}
\frac{\lambda^2}{2} \dir(\td F,\D)  \leq  \frac{\lambda ^2}{2} \dir(G,\D) + o(\lambda^2) 
\end{equation*}

\noindent which implies $\dir(\td F,\D)  \leq  \dir(G,\D) $, proving the proposition.

\end{proof}

For a specific example, consider the multiple-valued map $F$ defined by $z \mapsto (z,\sqrt{z^2 - \frac{1}{4}})$ for $z\in \C$.  It is clear that $F$ has two connecting branch points located at the points $z=\pm \frac{1}{2}$.

Note that $F|_{S^1}$ has a selection given by:

\begin{itemize}
\item $f_1(\theta) = (\exp (i \theta),\sqrt[4]{ (1+\frac{1}{16} - \frac{\cos(2\theta)}{2}) }\exp(i \cdot\frac{ \atantwo (\sin(2\theta), \cos(2\theta)-\frac{1}{4})}{2} ))$
\item $f_2(\theta)= (\exp (i \theta),  \sqrt[4]{ (1+\frac{1}{16} - \frac{\cos(2\theta)}{2}) }\exp(i\cdot (\frac{  \atantwo (\sin(2\theta), \cos(2\theta)-\frac{1}{4})}{2} )+\pi))$
\end{itemize}

\noindent Let $f_i = (f^1_i,f^2_i,f^3_i)$. Then, even though the above selection is discontinuous it may be used to show that the boundary has two connected components, as is suggested by the following images of the real and imaginary parts of the functions $f_i^3$.

\begin{center}
\includegraphics[scale=.5]{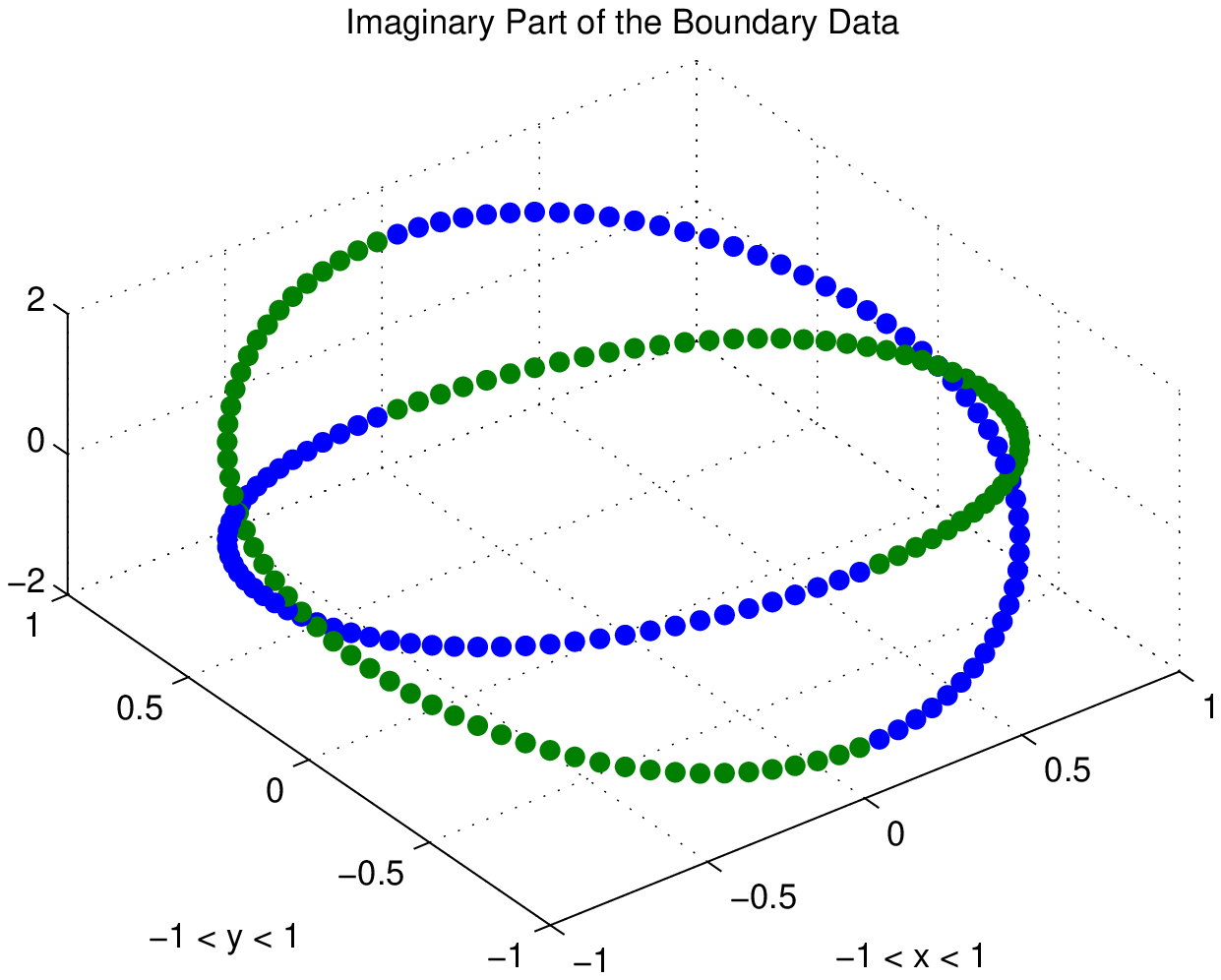} \vspace{5mm}\includegraphics[scale=.5]{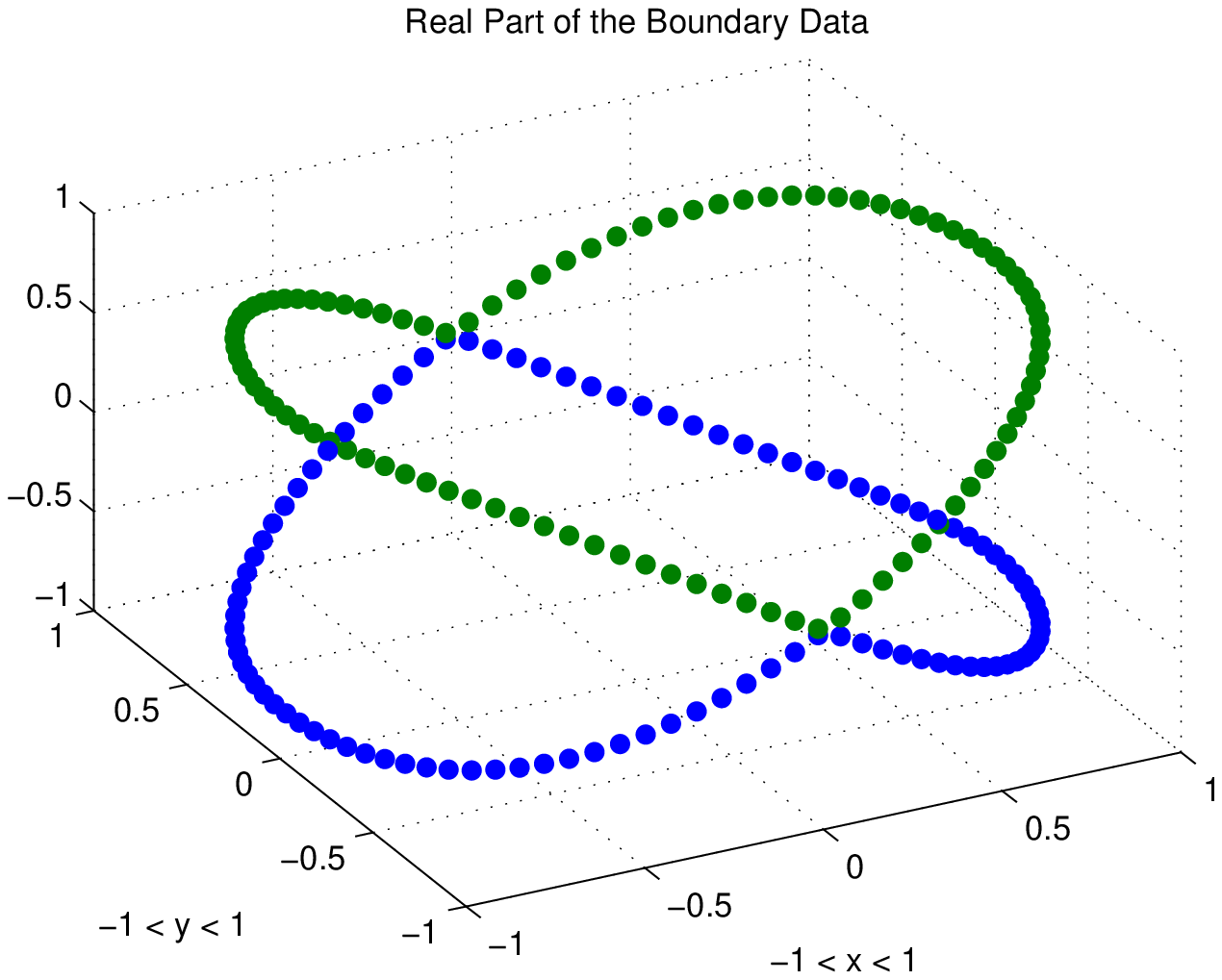} \vspace{5mm}
\end{center}

\noindent   One may check that $\Re(f^3_1(\theta)) = \Re(f^3_2(\theta) ) $ if and only if $\theta \in \{ \pm \frac{\pi}{2}\}$ and $\Im(f^3_1(\theta)) = \Im(f^3_2(\theta))$ if and only if $\theta \in \{0,\pi\}$.

Further, these images suggest that the following will give a continuous selection:

 \begin{displaymath}
   g_1(\theta) = \left\{
     \begin{array}{llr}
      f_1(\theta)& : & \theta \in [\frac{-\pi}{2},\frac{\pi}{2}]\\
      f_2(\theta) & : & \theta \in [\frac{\pi}{2},\frac{3\pi}{2}]\\
     \end{array}
   \right.
\end{displaymath}

 \begin{displaymath}
   g_2(\theta) = \left\{
     \begin{array}{llr}
      f_2(\theta)& : & \theta \in [\frac{-\pi}{2},\frac{\pi}{2}]\\
      f_1(\theta) & : & \theta \in [\frac{\pi}{2},\frac{3\pi}{2}]\\
     \end{array}
   \right.
\end{displaymath}

\noindent Which one checks provides a continuous selection of the boundary data. Further, by (locally) holomorphically defining $z \mapsto \sqrt{z \pm \frac{1}{2}}$ we see that, for sufficiently small $\rho > 0$, $F|_{\partial B_\rho (\pm \frac{1}{2}) }$ is a connecting branch point, since then it has  structure $\varphi \circ \sqrt{z\mp\frac{1}{2}} $, for $\varphi$ holomorphic.  Therefore, Proposition \ref{prop:classex} guarantees that $F$ is the Plateau solution, and thus that the Plateau solution is branched.

Branch points could also occur in $\mathbb R^3$ for essentially topological reasons.  For $0<\epsilon<1$, consider on the circle $S^1$, a single branch of the 2-valued function 
$\phi(z)=(\sqrt{z-\epsilon}, {\mathcal Re}\sqrt{z-\epsilon})$. Note that the image, {\it under this 2 valued map} is a single smoothly embedded circle in $\mathbb R^2\times\mathbb R$ whose projection onto $\mathbb R^2$ circulates the origin twice.  Then $\phi$ does not admit a continuous $2$-valued extension $F:\mathbb D\to A_2(\mathbb R^2\times\mathbb R)$ which is a locally smooth embedding without branch points. In fact otherwise one could start at the origin with two values and smoothly extend to obtain a selection on the whole disk, the image of whose boundary values would, unlike $\phi$, be two embedded curves. In particular, any 2-valued Plateau solution in $\R^3$ starting with this ``doubly-wrapped'' 2-valued boundary $\phi$ must have branch point.  By the same argument we see that:

{\it Any solution of the Plateau problem in Theorem 1.2 starting with boundary parameterization as in Theorem 1.1 with multiple wrapping (i.e. some $k_i > 1$) , whose boundary curves satisfy condition \ref{dfn:wrapped}  must have a branch point. In case $N=3$, the image then must curves of self-intersection. } 

It is the subject of current work to determine whether or not one can verify the conditions of Definition \ref{dfn:wrapped} in $\R^3$. 

However, with $Q=2$ and $k_1=k_2=1$, for two closed curves lying in different planes in $\R^3$, one can, using similar arguments to those in \cite[\S 8.8]{Hildmin} to guarantee that our Plateau solutions do in fact admit branch points in three dimensional space. Further, constancy along a boundary curve \emph{can} have elaborate branching behavior, as illustrated in the example below--although this example does differ slightly from the type of degeneracy that would occur if given boundary data did not admit a wrapped solution. 

\section{An Example of a Degenerate Behavior}
\label{section:degen}

In this section we provide a counterexample to the following question:

\begin{question}
\label{question:main} Let $Q\geq 2$ and suppose that $F: \D \rightarrow A_Q(\R^n)$ is Dirichlet minimizing, and $F|_{S^1} = \sum_{i=1}^{Q-1} \llb f_i \rrb+\llb 0 \rrb$, for monotone, continuous maps $f_i: S^1 \rightarrow \Gamma_i$, with $\Gamma_i$ closed Jordan curves with $0\notin \Gamma_i$. Then, is  $0 \in \spt(F(x))$ for all $x\in \D$? 
\end{question}

Note that the answer is \emph{yes} for $Q=1$, by the maximum principle for harmonic functions. Similarly,  for $n=1$, the answer to Question \ref{question:main} is again \emph{yes}, since one may find a continuous (and therefore Sobolev, since $F$ will be affinely approximatable almost everywhere, with $|DF|$ square integrable) selection for $F$, one of whose traces is zero on the boundary.

 These facts leads to the question for higher $Q$--and is an \emph{a priori} type of degeneracy that can occur for certain types of sequences that could have occurred in our existence proof. The answer to this question also illustrates just how weak the maximum principal (as in Section 3.2 of \cite{revisit}) for multiple-valued functions actually is.

Our counter example is in the case $Q=2$ and $n=2$, and our proof is unique to this case. However, simple examples show that this example may be extended to all $Q \geq 2$, provided $n > 1$. 

The key is to look instead for zero average multiple-valued functions on $D$ that admit a selection on $S^1$. The following notation will be useful in proving an equivalence between the two concepts.

\begin{notation}
\label{notation:sums}
For $f: \D\rightarrow \R^n$ and $H \in W^{1,2} (\D ,  A_2(\R^n))$, we denote by $H+f$ the function $\llb h_1 + f\rrb + \llb h_2 + f \rrb$, where $h_1$ and $h_2$ is a measurable selection for $H$. Note that $H+f$ is independent of the selection chosen.
\end{notation}

Next we recall a result of \cite{higher}:

\begin{thm}
\label{thm:mins}
Let $V\subset \C^\mu \cross \C^\nu$ be an irreducible holomorphic variety which is a $Q$-to-one cover of the ball $B_2\subset \C^\mu$ under the orthogonal projection. Then there is a Dirichlet minimizing $f\in W^{1,2}(B_1,  A_Q(\mbb R^{2\nu} ) )$ so that graph$(f)=V \cap (B_1 \cross C^\mu)$. 
\end{thm}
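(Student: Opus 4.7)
The plan is to adapt the scaling-based comparison already used in the proof of Proposition \ref{prop:classex} to the present higher-dimensional, higher-codimension setting. Define $F:B_1\to A_Q(\R^{2\nu})$ by $F(z)=\sum_{(z,w)\in V}\llb w\rrb$ at points where the projection $\pi$ is unbranched, and extend across the (complex codimension $\geq 1$) branch locus by continuity of the slices of $V$; irreducibility of $V$ ensures this is a single well-defined $Q$-valued function rather than a sum of functions living on several components. Membership $F\in W^{1,2}$ and rectifiability of the graph current follow from Theorem \ref{thm:higherests}, since $\llb V\rrb \restr B_1\cross\C^\nu$ has finite mass.

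Given any competitor $G\in W^{1,2}(B_1, A_Q(\R^{2\nu}))$ with $G|_{\partial B_1}=F|_{\partial B_1}$, I would introduce, for each $\lambda>0$, the scaled variety $V_\lambda=\{(z,\lambda w):(z,w)\in V\}$, which is again an irreducible holomorphic variety whose associated $Q$-valued function is $\lambda F$. Since holomorphic chains are absolutely mass-minimizing among integral currents with the same boundary (Federer's calibration theorem, via the Wirtinger inequality), and since $\lambda F$ and $\lambda G$ have matching boundary traces on $\partial B_1$, we would obtain the current-level inequality
\begin{equation*}
\mass\bigl(\llb V_\lambda\rrb\restr B_1\cross \C^\nu\bigr)\;\leq\;\mass(T_{\lambda G,B_1}).
\end{equation*}

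Invoking the two expansions supplied by Theorem \ref{thm:higherests},
\begin{equation*}
\mass\bigl(\llb V_\lambda\rrb\restr B_1\cross \C^\nu\bigr)=Q|B_1|+\tfrac{\lambda^2}{2}\dir(F,B_1),
\end{equation*}
\begin{equation*}
\mass(T_{\lambda G,B_1})=Q|B_1|+\tfrac{\lambda^2}{2}\dir(G,B_1)+o(\lambda^2),
\end{equation*}
subtracting $Q|B_1|$, dividing by $\lambda^2/2$, and letting $\lambda\to 0^+$ produces $\dir(F,B_1)\leq\dir(G,B_1)$, so $F$ is Dirichlet minimizing.

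The step I expect to be the main obstacle is the boundary-matching assertion: verifying that the trace identity $G|_{\partial B_1}=F|_{\partial B_1}$ in the $W^{1,2}$ sense forces the equality of integral currents $\partial T_{\lambda G,B_1}=\partial\bigl(\llb V_\lambda\rrb\restr B_1\cross\C^\nu\bigr)$ on $\partial B_1\cross\C^\nu$. This requires a slicing argument reconciling the $W^{1,2}$-trace of a multiple-valued map with the slice of its associated graph current, a point implicit in the definitions and techniques of \cite{higher}; granting it, the calibration property of complex varieties together with the two mass expansions finishes the proof at once.
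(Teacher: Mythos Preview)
The paper does not prove Theorem~\ref{thm:mins}; it is quoted verbatim as ``a result of \cite{higher}'' and used as a black box in Section~\ref{section:degen}. So there is no in-paper proof to compare against. Your proposal is, however, exactly the scaling/calibration argument the paper itself deploys in the proof of Proposition~\ref{prop:classex} (which in turn invokes Theorem~\ref{thm:higherests}, also from \cite{higher}); in that sense you have correctly reverse-engineered the mechanism.

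Two remarks on content. First, the ``main obstacle'' you flag is genuine and is precisely what is handled in \cite{higher}: one needs that for $G\in W^{1,2}$ with the same trace as $F$, the push-forward current $T_{\lambda G,B_1}$ has the same boundary as $\llb V_\lambda\rrb\restr(B_1\times\C^\nu)$, so that the calibration inequality applies. This is not automatic from the $W^{1,2}$ trace alone; it requires the construction of $T_{f,\Omega}$ and its boundary behaviour developed in \cite{higher}, and in practice one may first reduce to Lipschitz competitors by approximation. Second, note a mild circularity risk: both Theorem~\ref{thm:mins} and Theorem~\ref{thm:higherests} are imported from \cite{higher}, so invoking the latter to prove the former is only legitimate if, in \cite{higher}, the mass expansion is established independently of the minimality statement. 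It is, but you should check rather than assume.

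In short: your outline is the right one and matches the method of \cite{higher}; the paper itself simply cites the result.
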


Consider the variety $V=\{ (z,w) \ | \ z^2-\frac 1 4  = w^2 \}$. It is easy to see that $p(z,w) = w^2 - z^2 - \tfrac 1 4$ is irreducible. Notice that for $z\neq \pm \frac 1 2$, $V$ is $2:1$ cover under the orthogonal projection. Therefore,  Theorem \ref{thm:mins}\footnote{In the proof of Theorem \ref{thm:mins}, \cite{higher} actually only requires (using standard notation for currents in Euclidean space) that $\pi_\sharp (\llb V  \rrb ) = 2 \llb B_2 \rrb$} gives that the multiple-valued function $z\mapsto \sqrt{ z^2-\frac 1 4}$ is Dirichlet Minimizing over the disk $\D$.

Further, as shown in Section 6, $F$ admits a continuous selection on the boundary $S^1$ of $\D$, since this result follows from the fact that the graph map $z\mapsto (z, \sqrt{z^2-1/4})$ maps $S^1$ to two components. We also notice that $F$ is symmetric--i.e. that if $f_1$ and $f_2$ represent a measurable selection for $F$, then, since $f_1+f_2$ is harmonic and zero on the boundary, we must have $f_1 + f_2 =0$. 

Therefore, let $F|_{S^1} = \llb f \rrb + \llb - f \rrb$. With a slight abuse of notation, let $f$ also denote the harmonic extension of $f$ to the disk $\D$. We finally check that, since $F$ is Dirichlet minimizing, $\td F = F+f$ defined as in Notation \ref{notation:sums} is also Dirichlet minimizing for its boundary data.

This follows immediately from the following proposition.

\begin{prop}
Let $H \in W^{1,2} (\D ,  A_2(\R^n))$ be Dirichlet minimizing and suppose that $H|_{S^1} = \llb H_1 \rrb + \llb H_2\rrb$ for $H_1, \ H_2 : S^1 \rightarrow \R^n$ continuous. Let $h$ denote the harmonic extension of $H_1$ to $\D$. Then:

\begin{enumerate}
\item if $H_1 = -H_2$, then $\Dir(H + h, \D) =\Dir(H, D) +2\Dir(h, \D)$
\item if $H_2 = 0$, then $\Dir(H - \frac{h}{2}, \D) =\Dir(H, D) -2\Dir( \frac{h}{2}, \D)$
\end{enumerate}
\end{prop}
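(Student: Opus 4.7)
The plan is to expand the Dirichlet energy of $H+g$ (where $g$ is any suitable single-valued function) in terms of $H$, $g$, and a cross term involving the \emph{sum} selection $\sigma = h_1 + h_2$, which is a well-defined single-valued Sobolev function even though the individual branches $h_1, h_2$ are not. Pointwise, for any measurable selection, one computes
\begin{equation*}
|D(H+g)|^2 = |Dh_1+Dg|^2 + |Dh_2+Dg|^2 = |DH|^2 + 2\langle D\sigma,Dg\rangle + 2|Dg|^2,
\end{equation*}
which, upon integration, yields the general identity
\begin{equation*}
\Dir(H+g,\D) = \Dir(H,\D) + 2\int_{\D}\langle D\sigma,Dg\rangle + 2\Dir(g,\D).
\end{equation*}

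The central observation is that, since $H$ is Dirichlet minimizing, the average $\eta := \tfrac{1}{2}\sigma$ is harmonic on $\D$ (this is the standard ``center of mass is harmonic'' fact for Dirichlet minimizing $Q$-valued functions, originally due to Almgren and explained for instance in \cite{revisit}). Hence $\sigma$ is harmonic and its boundary trace is simply $H_1+H_2$. For case (1), $\sigma|_{S^1}=H_1+H_2=0$ forces $\sigma\equiv 0$ by uniqueness of harmonic extension, so the cross term vanishes for every $g$ and in particular for $g=h$, giving the claimed identity. For case (2), $\sigma|_{S^1}=H_1$, so by uniqueness $\sigma=h$; choosing $g=-\tfrac{h}{2}$ and integrating by parts (or just using $\sigma=h$ directly in the cross term) yields
\begin{equation*}
2\int_{\D}\langle Dh,-\tfrac{1}{2}Dh\rangle = -\int_{\D}|Dh|^2 = -4\Dir(\tfrac{h}{2},\D),
\end{equation*}
which combines with the $2\Dir(\tfrac{h}{2},\D)$ term to produce $-2\Dir(\tfrac{h}{2},\D)$, as required.

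The main obstacle — and the only step that is not an elementary algebraic manipulation — is verifying that $\sigma = h_1+h_2$ is a genuinely harmonic single-valued function on all of $\D$, including across the finitely many interior branch points of $H$. Away from branch points a local harmonic selection exists by the regularity theorem for Dirichlet minimizers, so $\sigma$ is locally harmonic; continuity of $\sigma$ across the (at most finite, for $m=2$) branch set, combined with the fact that the branch set has zero $2$-capacity, then promotes $\sigma$ to a globally harmonic function. Once harmonicity of $\sigma$ is in hand, both identities reduce to the one-line computation above.
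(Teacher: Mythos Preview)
Your argument is correct and is essentially the same as the paper's: both expand the energy of $H+g$ via a measurable selection to obtain a cross term $2\int_{\D}\langle D(h_1+h_2),Dg\rangle$, and then use that $\sigma=h_1+h_2$ is harmonic with boundary trace $H_1+H_2$ to identify $\sigma\equiv 0$ in case~(1) and $\sigma=h$ in case~(2). The only difference is that you justify the harmonicity of $\sigma$ more carefully (via the Almgren center-of-mass fact and removability across the finite branch set), whereas the paper simply asserts it.
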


\begin{proof}[Proof of Proposition]

Denote for two $m\cross n$ matricies $(a_{ij})$ and $(b_{ij})$ the matrix inner product:

\begin{equation*}
( (a_{ij} ): (b_{ij}) ) = \sum_{i,j} a_{ij}\overline{b_{ij}}.
\end{equation*} 

\noindent Note that the norm induced by this inner product is the Hilbert-Schmidt norm.

Let $h_1$ and $h_2$ denote a measurable selection for $G$. In case (1), properties of the inner product above imply that at any point where $G$ is affinely approximatable (which constitutes a full measure set of $\D$) we have:

\begin{equation*}
||Dh_1 + Dh || ^2 + ||Dh_2 + D h||^2 =  ||DH||^2 + 2||Dh||^2 + 2(Dh_1 + Dh_2 : Dh) .
\end{equation*}

\noindent But, $ 2(Dh_1 + Dh_2 : Df) =0$ since $h_1 + h_2 = 0$, and this gives the result.

For case (2):

\begin{equation*}
||Dh_1 - \frac{Dh} 2  || ^2 + ||Dh_2 -\frac{Dh} 2||^2 =  ||DH||^2 + 2||\frac{Dh} 2||^2 - 2(Dh_1 + Dh_2 :\frac{Dh} 2) .
\end{equation*}

\noindent In this case, though, since $h_1+h_2$ is harmonic and has the same boundary data as $h$ it must be the case that  $h_1+h_2 = h$, so:

\begin{equation*}
 2(Dh_1 + Dh_2 :\frac{Dh} 2) = 2(Dh,\frac{Dh}2) = 4 ||\frac{Dh}{2}||^2.
 \end{equation*}
 
 \noindent Which implies (2).

\end{proof}

Now we prove $\td F$ is Dirichlet minimizing. (1) of the above proposition guarantees:

\begin{equation*}
\Dir(\td F, \D) = \Dir (F+f, \D) = \Dir(F,\D) + 2 \Dir(f, \D)
\end{equation*}

Let $G \in W^{1,2} (\D ,  A_2(\R^n))$ be Dirichlet minimizing and suppose that $G|_{S^1} =\td F_{S^1}=\llb 2f\rrb + \llb 0\rrb$. Then case (2) of the proposition implies:

\begin{equation*}
 \Dir (G-f, \D) = \Dir(G,\D) - 2 \Dir(f, \D).
\end{equation*}

\noindent So, if

\begin{equation*}
 \Dir(G,\D) < \Dir(\td F, \D)
 \end{equation*}
 
 \noindent then:

\begin{equation*}
\Dir(G,\D) < \Dir(F, \D) + 2\Dir(f,\D)
\end{equation*}

\noindent which implies:

\begin{equation*}
\Dir(G-f,\D)=\Dir(G,\D)- 2\Dir(f,\D) < \Dir(F, \D) .
\end{equation*}

\noindent However, this last inequality is impossible since $F$ is minimizing, so $\td F$ is Dirichlet minimizing.

 However, since $F$ admits no global selection (due to its connecting branch points), $\td F$ also admits no such selection, and therefore $0\notin \spt(\td F(x))$ for some $x\in \D$, so $\td F$ is a counterexample to Question \ref{question:main}.

\bibliographystyle{plain} \bibliography{reference}

\end{document}